\newcommand{\myparagraph}[1]{\paragraph{#1}\mbox{}\\}
 \DeclareMathOperator*{\argmax}{argmax} 
 \newcommand{\operator}[1]{{\normalfont \texttt{#1}}}
\newtheorem*{complexity}{\normalfont \textit{Complexity}}
\begin{document}

\title{Representation of Polytopes as Polynomial Zonotopes
}


\author{Niklas Kochdumper        \and
        Matthias Althoff 
}


\institute{Niklas Kochdumper \at
              Technical University of Munich\\
              \email{niklas.kochdumper@tum.de}           
           \and
           Matthias Althoff\at
              Technical University of Munich\\
              \email{althoff@tum.de}   
}

\date{}

\maketitle

\begin{abstract}
We prove that each bounded polytope can be represented as a polynomial zonotope, which we refer to as the Z-representation of polytopes. Previous representations are the vertex representation (V-representation) and the halfspace representation (H-representation). Depending on the polytope, the Z-representation can be more compact than the V-representation and the H-representation. In addition, the Z-representation enables the computation of linear maps, Minkowski addition, and convex hull with a computational complexity that is polynomial in the representation size. The usefulness of the new representation is demonstrated by range bounding within polytopes.
\keywords{Polytopes \and Polynomial zonotopes \and Z-representation.}
\end{abstract}


\section{Introduction}
\label{sec:Introduction}

\subsection{Related Work}
\label{subsec:RelatedWork}

The two main representations of polytopes are the vertex representation (V-representation) and the halfspace representation (H-representation). While the V-representation describes a polytope as the convex hull of its vertices, the H-representation defines a polytope as the intersection of halfspaces. Whether the V-representation or the H-representation is more compact depends on the polytope. The conversion from V-representation to H-representation is known as the \textit{facet enumeration problem}, and the inverse problem of finding the V-representation given the H-representation is known as the \textit{vertex enumeration problem}. Both transformations can be computed in polynomial time with respect to the number of polytope vertices \cite{Kaibel2003}. One of the major criteria for a comparison of different set representations is the computational efficiency of operations on them. Of special interest are operations that are closed for polytopes: Linear transformation, Minkowski sum, convex hull, and intersection. 

Let us first consider the H-representation. Linear transformations are simple if the matrix $M$ defining the transformation is invertible; otherwise, the computational complexity is exponential in the dimension $n$ \cite{Jones2008}. The complexity of computing the Minkowski sum of two H-polytopes is also exponential in $n$ since the worst-case number of resulting halfspaces is exponential in $n$ \cite{Hagemann2015,Tiwary2008}. The calculation of the convex hull of two polytopes in H-representation is NP-hard \cite{Tiwary2008}. Computation of the intersection of two polytopes in H-representation can be easily realized by a concatenation of the inequality constraints. A possible subsequent elimination of redundant halfspaces can be implemented with linear programming. 

For the V-representation, the situation is different: The linear transformation is trivial to calculate, even for cases where the transformation matrix $M$ is not invertible. Computation of the Minkowski sum of two polytopes in V-representation has exponential complexity in the number of dimensions \cite{Fukuda2004,Gritzmann1993}. The same holds for the convex hull operation, because constructing a redundant V-representation of the convex hull is trivial, and the redundant points can be eliminated by solving an exponential number of linear programs \cite{Tiwary2008}. However, computation of the intersection of two polytopes in V-representation is NP-hard \cite{Tiwary2008}. Overall, neither the H-representation nor the V-representation has polynomial complexity for all four discussed operations. 

Two other set representations which are able to represent any bounded polytope are constrained zonotopes \cite{Scott2016} and zonotope bundles \cite{Althoff2011f}. Constrained zonotopes are zonotopes with additional linear equality constraints on the zonotope factors. Since the implementation of the linear transformation, the Minkowski sum, and the intersection operation only involves matrix multiplications, additions, and concatenations \cite[Prop. 1]{Scott2016}, their computational complexity is at most $\mathcal{O}(n^3)$. Furthermore, there exist efficient techniques for the removal of redundant constraints and zonotope generators \cite[Sec. 4.2]{Scott2016}. However, at present there is no known algorithm for the computation of the convex hull of two constraint zonotopes. Zonotope bundles define a set implicitly as the intersection of multiple zonotopes. While the computation of the linear transformation and the intersection is trivial, efficient algorithms for the Minkowski sum \cite[Prop. 2]{Althoff2011f} and the convex hull \cite[Prop. 5]{Althoff2011f} only exist for the calculation of an over-approximation. 
 
The Z-representation that we present in this paper is based on polynomial zonotopes, a non-convex set representation first introduced in \cite{Althoff2013a}. In particular, we use the sparse representation of polynomial zonotopes from \cite{Kochdumper2019}. Taylor Models are a type of set representation closely related to polynomial zonotopes \cite{Makino2003}. However, while Taylor models, like polynomial zonotopes, are able to represent non-convex sets, they can not represent arbitrary polytopes.  

The Z-representation is based on polynomials; the interplay between polytopes and polynomials has a long history in computational geometry: the Newton polytope of a polynomial is the convex hull of its exponent vectors \cite{Khovanskii1977}. Newton polytopes are, among other things, useful for analyzing the roots of multivariate polynomials \cite{Sturmfels1998}. The Ehrhart polynomial of a polytope specifies the number of integer points the polytope contains \cite{Hibi1990}. Furthermore, it is well-known that polytopes can be equivalently represented by polynomial inequalities \cite{Grotschel2003}. The classical result of Minkowski implies that a polytope can be approximated arbitrary closely by a single polynomial inequality \cite{Minkowski1903}. 

In this work, we introduce the Z-representation for bounded polytopes. This new representation enables the computation of the linear transformation, Minkowski sum, and convex hull with polynomial complexity with respect to the representation size. In addition, the Z-representation can be more compact than the V- and the H-representation. We further provide algorithms for the conversion from V-representation to Z-representation and from Z-representation to V-representation. One application of the Z-representation is range bounding, for which we demonstrate the advantages resulting from the new representation.

\subsection{Notation}
\label{subsec:Notation}

In the remainder of this paper, we will use the following notations: Sets and tuples are always denoted by calligraphic letters, matrices by uppercase letters, and vectors by lowercase letters. Given a vector $b \in \mathbb{R}^n$, $b_{(i)}$ refers to the $i$-th entry. Given a matrix $A \in \mathbb{R}^{n \times m}$, $A_{(i,\cdot)}$ represents the $i$-th matrix row, $A_{(\cdot,j)}$ the $j$-th column, and $A_{(i,j)}$ the $j$-th entry of matrix row $i$. The empty matrix is denoted by $[~]$. Given two matrices $C$ and $D$, $[C,D]$ denotes the concatenation of the matrices. The symbols $\mathbf{0}$ and $\mathbf{1}$ represent matrices of zeros and ones of proper dimension. Given a $n$-tuple $\mathcal{L} = (l_1,\dots,l_n)$, $|\mathcal{L}| = n$ denotes the cardinality of the tuple and $\mathcal{L}_{(i)}$ refers to the $i$-th entry of tuple $\mathcal{L}$. Given a tuple $\mathcal{H} = (h_1,\dots,h_n)$ with $h_i \in \mathbb{R}^{m_i} ~ \forall i \in \{1,\dots,n\}$, notation $\mathcal{H}_{(i,j)} = h_{i(j)}$ refers to the $j$-th entry in the $i$-th element of $\mathcal{H}$. The empty tuple is denoted by $\emptyset$. Given two tuples $\mathcal{L} = (l_1, \dots ,l_n)$ and $\mathcal{K} = (k_1, \dots, k_m)$, $( \mathcal{L}, ~ \mathcal{K} ) = (l_1, \dots ,l_n,k_1,\dots,k_m)$ denotes the concatenation of the tuples. We further introduce an n-dimensional interval as $\mathcal{I} := [l,u],~ \forall i ~ l_{(i)} \leq u_{(i)},~ l,u \in \mathbb{R}^n$. The ceil operator $\lceil x \rceil$ and the floor operator $\lfloor x \rfloor$ round a scalar number $x \in \mathbb{R}$ to the next higher and lower integer, respectively. The binomial coefficient is denoted by ${ r \choose z }$, $r,z \in \mathbb{N}$. For the derivation of computational complexity, we consider all binary operations, except concatenations and initializations since their computational time is negligible.


\section{Preliminaries}
\label{sec:Preliminaries}

We first provide some definitions that are important for the remainder of the paper.

\begin{definition}
	(V-Representation) Given the $q$ polytope vertices $v_i \in \mathbb{R}^n$, the vertex representation of $\mathcal{P} \subset \mathbb{R}^n$ is defined as
	\begin{equation*}
		 \mathcal{P} := \left\{ \left. \sum_{i=1}^{q} \beta_i v_i ~ \right |~ \beta_i \geq 0, ~ \sum_{i=1}^q \beta_i = 1 \right\}.
	\end{equation*}
	\label{def:Vrep}
\end{definition}
For a concise notation we introduce the shorthand $\mathcal{P} = \langle [v_1,\dots,v_q] \rangle _{V}$ for the V-representation. The V-representation is called redundant if the matrix $[v_1,\dots,v_q]$ contains points that are not vertices of $\mathcal{P}$.

\begin{definition}
	(H-Representation) Given a matrix $A \in \mathbb{R}^{h \times n}$ and a vector $b \in \mathbb{R}^h$, the halfspace representation of $\mathcal{P} \subset \mathbb{R}^n$ is defined as 
	\begin{equation*}
		 \mathcal{P} := \left\{ x ~|~ A x \leq b \right\}.
	\end{equation*}
\end{definition}

Polynomial zonotopes are a non-convex set representation that was first introduced in \cite{Althoff2013a}. We use a slight modification of the sparse representation of polynomial zonotopes from \cite{Kochdumper2019}:

\begin{definition}
  (Polynomial Zonotope) Given a generator matrix $G \in \mathbb{R}^{n \times h}$ and an exponent matrix $E \in \mathbb{Z}_{\ge 0}^{p \times h}$, a polynomial zonotope $\mathcal{PZ} \subset \mathbb{R}^n$ is defined as  
  \begin{equation}
    \mathcal{PZ} := \left\{ \left. \sum _{i=1}^h \left( \prod _{k=1}^p \alpha _k ^{E_{(k,i)}} \right) G_{(\cdot,i)} ~ \right| ~\alpha_k \in [-1,1]\right\}.
  \label{eq:polyZonotope}
  \end{equation}
  \label{def:polyZonotope}
\end{definition} 
The variables $\alpha_k$ are called factors, and the vectors $G_{(\cdot,i)}$ generators. The number of factors $\alpha_k$ is $p$, and the number of generators is $h$.

The set operations considered in this paper are linear transformation, Minkowski sum, convex hull, and intersection.  Given two sets $\mathcal{S}_1, \mathcal{S}_2 \subset \mathbb{R}^n$, these operations are defined as follows: 
\begin{alignat}{2}
	& \mathrm{- Linear}~\mathrm{transformation:} ~~ && M \mathcal{S}_1 = \{ M s_1 ~|~ s_1 \in \mathcal{S}_1 \},~ M \in \mathbb{R}^{m \times n} \label{eq:linTrans}\\
	& \mathrm{- Minkowski} ~\mathrm{sum:} && \mathcal{S}_1 \oplus \mathcal{S}_2 = \{ s_1 + s_2 ~|~ s_1 \in \mathcal{S}_1,~ s_2 \in \mathcal{S}_2 \} 
	\label{eq:minSum} \\
	& \mathrm{- Convex} ~ \mathrm{hull:} && conv(\mathcal{S}_1,\mathcal{S}_2) = \left \{ \frac{1}{2} (1+\lambda) s_1 + \frac{1}{2}(1-\lambda) s_2 ~|~ s_1 \in \mathcal{S}_1,~s_2 \in \mathcal{S}_2,~\lambda \in [-1,1] \right\} \label{eq:convHull} \\
	& \mathrm{- Intersection:} && \mathcal{S}_1 \cap \mathcal{S}_2 = \{ s ~|~ s \in \mathcal{S}_1,~ s \in \mathcal{S}_2 \} \label{eq:intersection}
\end{alignat}


\section{Z-Representation of Polytopes}
\label{sec:Z-Representation}

In this section, we introduce the Z-representation of bounded polytopes, we derive the equations for set operations applied in Z-representation, and we provide algorithms for the conversion between P- and V-representation. 

\subsection{Definition}

We first introduce the Z-representation of bounded polytopes:

\begin{definition}
	(Z-Representation) Given a starting point $c \in \mathbb{R}^n$ and a generator matrix $G \in \mathbb{R}^{n \times h}$ , the Z-representation defines the set
	\begin{equation}
		\begin{split}
		&\mathcal{P} := \left\{ \left. c + \sum _{i=1}^h \left( \prod _{k=1}^{m_i} \alpha _{\mathcal{E}_{(i,k)}} \right) G_{(\cdot,i)} ~ \right| ~\alpha_{\mathcal{E}_{(i,k)}} \in [-1,1] \right\} \\
		& ~ \\
		&\mathcal{E} = \left( e_1, \dots ,e_h \right),~\forall i \in \{1,\dots,h \}: ~ e_i \in \mathbb{N}_{\leq p}^{m_i},~ \mathrm{and}~  \forall i \in \{1,\dots,h \} ~\forall j,k: ~ j \neq k \Rightarrow e_{i(j)} \neq e_{i(k)} ,
		\end{split}
		\label{eq:pRep}
	\end{equation}
	where the tuple $\mathcal{E}$ stores the factor indices, $m_i$ is the length of the $i$-th element of $\mathcal{E}$, $p$ is the number of factors $\alpha _{\mathcal{E}_{(i,k)}}$, and $h$ is the number of generators. The overall number of entries in $\mathcal{E}$ is 
	\begin{equation}
	 	\mu = \sum_{i=1}^h m_i.
	 	\label{eq:numListElements}
	 \end{equation} 
The Z-representation is regular if
	\begin{equation}
		\forall j,k:~ j \neq k \Rightarrow e_k \neq e_j.
		\label{eq:regular}
	\end{equation}
	\label{def:pRep}
\end{definition}
For a concise notation we introduce the shorthand $\mathcal{P} = \langle c,G, \mathcal{E} \rangle_Z$. All components of a set $\square_i$ have the index $i$, e.g., $p_i$, $h_i$, $\mu_i$, and $m_{i,j}$, $j=1\dots h_i$ belong to $\mathcal{P}_i = \langle c_i,G_i,\mathcal{E}_i \rangle_Z$. In the remainder of this work, we call the term $\alpha_{\mathcal{E}_{(i,1)}} \cdot \ldots \cdot \alpha_{\mathcal{E}_{(i,m_i)}} \cdot G_{(\cdot,i)}$ in \eqref{eq:pRep} a monomial and $\alpha_{\mathcal{E}_{(i,1)}} \cdot \ldots \cdot \alpha_{\mathcal{E}_{(i,m_i)}}$ the variable part of the monomial. The Z-representation defines a special type of polynomial zonotope (see Def. \ref{def:polyZonotope}) where the exponents of the factors $\alpha_k$ are restricted to the values 0 and 1. Therefore, every set in Z-representation can be equivalently represented as a polynomial zonotope. It further holds that every bounded polytope can be equivalently represented by the Z-representation, which we prove later in Theorem \ref{theo:mainTheorem}. The converse does not hold: not every set defined by a Z-representation is a bounded polytope. We illustrate the Z-representation of polytopes with two simple examples:

\begin{example}
	The Z-representation
	\begin{equation*}
		\mathcal{P} = \left\langle \begin{bmatrix} -0.5 \\ 0 \end{bmatrix}, \begin{bmatrix} 1.5 & -0.5 & -0.5 \\ -0.5 & -2 & 0.5\end{bmatrix},~ \left( 1, 2, \begin{bmatrix} 1 \\ 2 \end{bmatrix} \right) \right\rangle_Z
	\end{equation*}
	defines the polytope 
	\begin{equation*}
		\mathcal{P} = \left\{ \left. \begin{bmatrix} -0.5 \\ 0 \end{bmatrix} + \alpha_1 \begin{bmatrix} 1.5 \\ -0.5 \end{bmatrix} + \alpha_2 \begin{bmatrix} -0.5 \\ -2 \end{bmatrix} + \alpha_1 \alpha_2 \begin{bmatrix} -0.5 \\ 0.5 \end{bmatrix} ~ \right| ~ \alpha_1, \alpha_2 \in [-1,1]  \right\},
	\end{equation*}
	\label{ex:pRep1}
\end{example}
which is visualized in Fig. \ref{fig:Example1and2} (left).

\begin{example}
	The Z-representation
	\begin{equation*}
		\mathcal{P} = \left\langle \begin{bmatrix} -0.5 \\ 0 \end{bmatrix},\begin{bmatrix} -0.5 & -0.5 & 1.5 \\ 0.5 & -2 & -0.5 \end{bmatrix},~ \left( 1, 2, \begin{bmatrix} 1 \\ 2 \end{bmatrix} \right) \right\rangle_Z
	\end{equation*}
	defines the set
	\begin{equation*}
		\mathcal{P} = \left\{ \left. \begin{bmatrix} -0.5 \\ 0 \end{bmatrix} + \alpha_1 \begin{bmatrix} -0.5 \\ 0.5 \end{bmatrix} + \alpha_2 \begin{bmatrix} -0.5 \\ -2 \end{bmatrix} + \alpha_1 \alpha_2 \begin{bmatrix} 1.5 \\ -0.5 \end{bmatrix} ~ \right| ~ \alpha_1, \alpha_2 \in [-1,1]  \right\},
	\end{equation*}
	which is not a polytope as can be seen in Fig. \ref{fig:Example1and2} (right).
	\label{ex:pRep2}
\end{example}

\begin{figure}
	\centering
  	\includegraphics[width = 0.95 \textwidth]{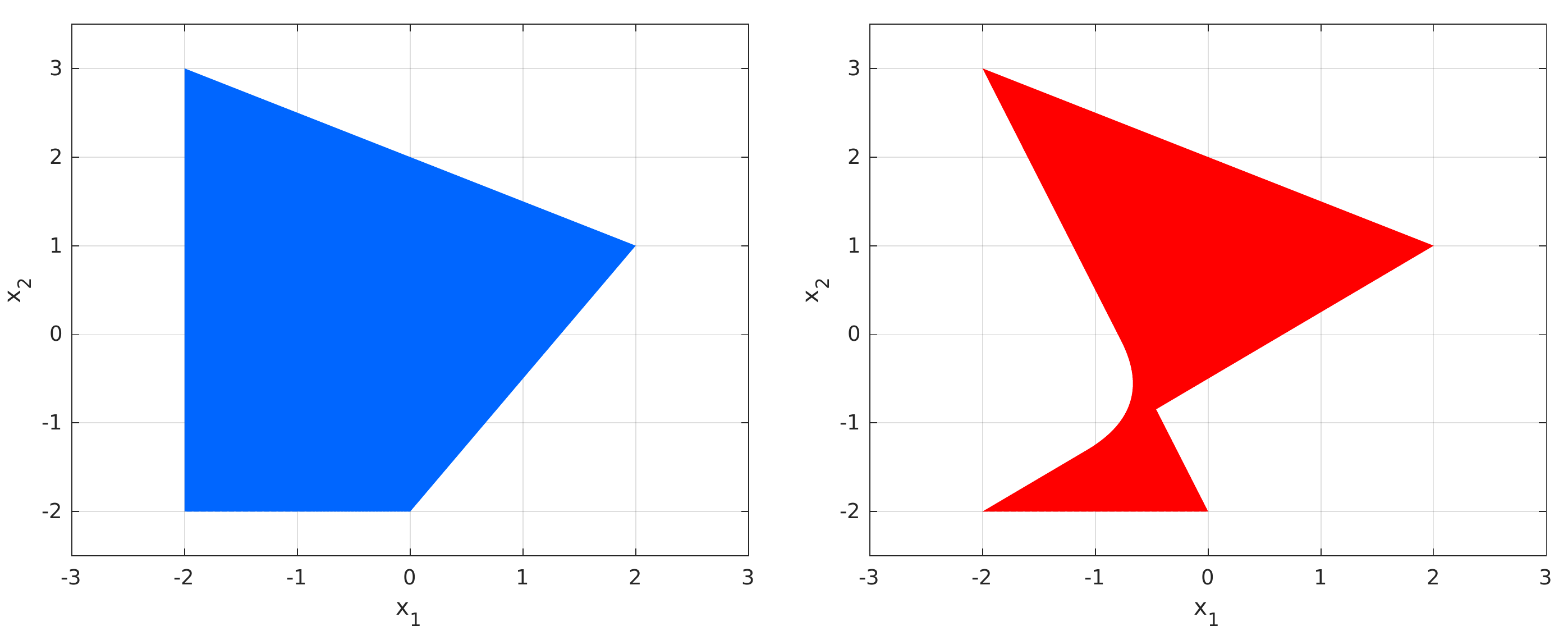}
	\captionof{figure}{Visualization of the set defined by the Z-representation from Example \ref{ex:pRep1} (left) and from Example \ref{ex:pRep2} (right).}
	\label{fig:Example1and2}
\end{figure}

\subsection{Operations}

In this subsection, we derive and prove implementations for the set operations in \eqref{eq:linTrans}-\eqref{eq:intersection}. In addition, we determine the computational complexity for each operation. For further derivations, let us establish that according to the definition of the Z-representation in \eqref{eq:pRep} we can write
\begin{equation}
	\begin{split}
		& \left \{ c + \sum _{i=1}^{h_1} \left( \prod _{k=1}^{m_{1,i}} \alpha _{\mathcal{E}_{1(i,k)}} \right) G_{1(\cdot,i)} + \sum _{i=1}^{h_2} \left( \prod _{k=1}^{m_{2,i}} \alpha _{\mathcal{E}_{2(i,k)}} \right) G_{2(\cdot,i)} ~ \bigg | ~ \alpha _{\mathcal{E}_{1(i,k)}}, \alpha _{\mathcal{E}_{2(i,k)}} \in [-1,1] \right \} \\ 
		& ~ \\
		&  = \left \langle c,[G_1, G_2], \big( \mathcal{E}_1, \mathcal{E}_2 \big)  \right \rangle_Z.
	\end{split}	
	\label{eq:sumPrep}
\end{equation}

\subsubsection{Linear Transformation}

\begin{proposition}
	(Linear Transformation) Given a set in Z-representation $\mathcal{P} = \langle c, G, \mathcal{E} \rangle_Z \subset \mathbb{R}^n$ and a matrix $M \in \mathbb{R}^{m \times n}$, the linear transformation is computed as 
	\begin{equation}
		M \mathcal{P} = \langle M  c, ~M G,~ \mathcal{E}  \rangle_Z,
	\end{equation}
	which has complexity $\mathcal{O}(mnh)$, where $h$ is the number of generators.
\end{proposition}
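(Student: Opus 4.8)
The plan is to prove the identity by directly applying the linear map $M$ to the defining set of $M\mathcal{P}$ and simplifying, then counting the arithmetic operations. This is a routine verification, so I expect no genuine obstacle; the only mild care needed is in the complexity bookkeeping.

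First I would expand $M\mathcal{P}$ using the definition of the linear transformation in \eqref{eq:linTrans} together with the definition of the Z-representation in \eqref{eq:pRep}: for each $x \in \mathcal{P}$ we have $x = c + \sum_{i=1}^h \big(\prod_{k=1}^{m_i} \alpha_{\mathcal{E}_{(i,k)}}\big) G_{(\cdot,i)}$ with $\alpha_{\mathcal{E}_{(i,k)}} \in [-1,1]$, hence
\begin{equation*}
  Mx = Mc + \sum_{i=1}^h \left( \prod_{k=1}^{m_i} \alpha_{\mathcal{E}_{(i,k)}} \right) M G_{(\cdot,i)}.
\end{equation*}
Since the scalar coefficients $\prod_{k=1}^{m_i}\alpha_{\mathcal{E}_{(i,k)}}$ commute with matrix multiplication, $MG_{(\cdot,i)} = (MG)_{(\cdot,i)}$, and the index tuple $\mathcal{E}$ is unchanged, the right-hand side is exactly the generic element of $\langle Mc,\, MG,\, \mathcal{E}\rangle_Z$. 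Because the factors $\alpha_{\mathcal{E}_{(i,k)}}$ range over the same intervals $[-1,1]$ on both sides, the two sets coincide, which gives $M\mathcal{P} = \langle Mc, MG, \mathcal{E}\rangle_Z$.

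For the complexity, I would observe that the computation consists solely of the matrix-vector product $Mc$ and the matrix-matrix product $MG$; the tuple $\mathcal{E}$ is merely carried over, which costs nothing under the convention stated in Section~\ref{subsec:Notation} that concatenations and initializations are neglected. Computing $Mc$ with $M \in \mathbb{R}^{m\times n}$ requires $\mathcal{O}(mn)$ binary operations, and computing $MG$ with $G \in \mathbb{R}^{n\times h}$ requires $\mathcal{O}(mnh)$ binary operations. Since $h \ge 1$ in any nontrivial case, the product $MG$ dominates, so the overall complexity is $\mathcal{O}(mnh)$, as claimed. The hardest part, if any, is simply making sure the set-equality argument is stated symmetrically (every element of one side is an element of the other, using the same factor ranges), but this is immediate from the structure of \eqref{eq:pRep}.
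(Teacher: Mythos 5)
Your proposal is correct and follows exactly the paper's argument: expand the definition of the Z-representation inside the linear map, pull $M$ through the sum so that $MG_{(\cdot,i)} = (MG)_{(\cdot,i)}$, and note that the complexity $\mathcal{O}(mn) + \mathcal{O}(mnh) = \mathcal{O}(mnh)$ comes from the products $Mc$ and $MG$. The paper's proof is just a more terse version of the same verification.
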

\begin{proof}
	The result follows directly from inserting the definition of the Z-representation in \eqref{eq:pRep} into the definition of the linear transformation \eqref{eq:linTrans}. 
\end{proof} 

\begin{complexity} \normalfont The complexity results from the matrix multiplication with the starting point $c$ and the generator matrix $G$ and is therefore $\mathcal{O}(mn) + \mathcal{O}(mnh) = \mathcal{O}(mnh)$. \hfill $\square$
\end{complexity}

\subsubsection{Minkowski Sum}

\begin{proposition}
	(Minkowski Sum) Given two sets in Z-representation $\mathcal{P}_1 = \langle c_1, G_1, \mathcal{E}_1 \rangle_Z$ and $\mathcal{P}_2 = \linebreak[4] \langle c_2, G_2,\mathcal{E}_2 \rangle_Z$, the Minkowski sum is computed as 
	\begin{equation}
		\begin{split}
			& \mathcal{P}_1 \oplus \mathcal{P}_2 = \left \langle c_1 + c_2, \left[ G_1 , G_2 \right], \left( \mathcal{E}_1, \overline{\mathcal{E}}_2 \right)	\right \rangle_Z \\
			& ~ \\
			& \mathrm{with} ~~ \overline{\mathcal{E}}_2 = \left( \mathcal{E}_{2(1)} + \mathbf{1} ~ p_1 ~, ~ \dots ~, ~ \mathcal{E}_{2(h_2)} + \mathbf{1} ~ p_1 \right),
		\end{split}
		\label{eq:minSumPrep}
	\end{equation}
	which has complexity $\mathcal{O}(n + \mu_2)$, where $p_1$ is the number of factors of $\mathcal{P}_1$ and $\mu_2$ is the number of entries in tuple $\mathcal{E}_2$.
\end{proposition}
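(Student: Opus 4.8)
The plan is to start from the definition of the Minkowski sum in \eqref{eq:minSum}, substitute the Z-representations of $\mathcal{P}_1$ and $\mathcal{P}_2$ from \eqref{eq:pRep}, and then argue that the factors appearing in the representation of $\mathcal{P}_2$ may be freely renamed. Writing $\mathcal{P}_1 \oplus \mathcal{P}_2 = \{ s_1 + s_2 \mid s_1 \in \mathcal{P}_1,~ s_2 \in \mathcal{P}_2 \}$ and inserting the two representations yields a sum over the monomials of $\mathcal{P}_1$ (in the factors $\alpha_1,\dots,\alpha_{p_1}$) plus a sum over the monomials of $\mathcal{P}_2$ (in the factors $\alpha_1,\dots,\alpha_{p_2}$), where the two groups of factors each range independently over $[-1,1]$ precisely because $s_1$ and $s_2$ are chosen independently.

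The key step is the observation that, since $s_1 \in \mathcal{P}_1$ and $s_2 \in \mathcal{P}_2$ are selected independently, the set of values of $s_2$ is unchanged if every factor index $k$ occurring in $\mathcal{E}_2$ is replaced by $k + p_1$; this substitution is exactly what $\overline{\mathcal{E}}_2$ encodes, and it preserves the distinctness condition in \eqref{eq:pRep} (and the regularity condition \eqref{eq:regular}, if present) since it is an injective shift of indices. After this relabeling, the factor index set of the first summand, $\{1,\dots,p_1\}$, is disjoint from that of the second, $\{p_1+1,\dots,p_1+p_2\}$, so the combined object has $p_1+p_2$ mutually independent factors ranging over $[-1,1]$. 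At this point \eqref{eq:sumPrep} applies verbatim and collapses the expression into $\langle c_1 + c_2, [G_1, G_2], (\mathcal{E}_1, \overline{\mathcal{E}}_2) \rangle_Z$, which is the claimed result, the two starting points $c_1$ and $c_2$ simply adding.

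The main obstacle is making the relabeling argument watertight: one has to check that shifting the indices induces a bijection between admissible factor assignments for $\mathcal{P}_2$ and admissible assignments for the shifted factors, so that neither set inclusion is lost, and that no index collision with $\mathcal{E}_1$ can occur — this holds because every original index in $\mathcal{E}_2$ is at least $1$, hence every shifted index is at least $p_1+1$, strictly above any index $\le p_1$ occurring in $\mathcal{E}_1$. Everything else is bookkeeping. For the complexity statement I would note that forming the new starting point $c_1+c_2$ costs $\mathcal{O}(n)$ additions, that the generator matrix $[G_1,G_2]$ and the tuple concatenation $(\mathcal{E}_1,\overline{\mathcal{E}}_2)$ are concatenations and therefore free by the convention in Section~\ref{subsec:Notation}, and that computing $\overline{\mathcal{E}}_2$ requires adding the constant $p_1$ to each of the $\mu_2$ entries of $\mathcal{E}_2$; summing these contributions yields $\mathcal{O}(n + \mu_2)$.
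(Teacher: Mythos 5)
Your proposal is correct and follows essentially the same route as the paper: insert the Z-representations into the definition of the Minkowski sum, shift the factor indices of $\mathcal{P}_2$ by $p_1$ so that the two groups of factors are disjoint and hence independent, and collapse the result via \eqref{eq:sumPrep}; the complexity accounting also matches. Your explicit justification of the relabeling (injectivity of the shift, no collision with the indices in $\mathcal{E}_1$) is a more careful spelling-out of what the paper leaves implicit in its substitution $\alpha_{\mathcal{E}_{2(i,k)}+p_1} = \alpha_{\overline{\mathcal{E}}_{2(i,k)}}$.
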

\begin{proof}
	The proposition follows from inserting the definition of the Z-representation in \eqref{eq:pRep} into the definition of the Minkowski sum \eqref{eq:minSum} and using \eqref{eq:sumPrep}:
	\begin{equation}
	\begin{split}
		\mathcal{P}_1 \oplus \mathcal{P}_2 &= \Bigg \{ \underbrace{c_1 + c_2}_{c} + \sum _{i=1}^{h_1} \left( \prod _{k=1}^{m_{1,i}} \alpha _{\mathcal{E}_{1(i,k)}} \right) G_{1(\cdot,i)} + \sum _{i=1}^{h_2} \Bigg( \prod _{k=1}^{m_{2,i}} \underbrace{\alpha _{\mathcal{E}_{2(i,k)} + p_1}}_{\alpha_{\overline{\mathcal{E}}_{2(i,k)}}} \Bigg) G_{2(\cdot,i)} ~ \Bigg|~ \alpha _{\mathcal{E}_{1(i,k)}},~ \alpha_{\overline{\mathcal{E}}_{2(i,k)}}  \in [-1,1] \Bigg \} \\ 
		& ~ \\
		& \overset{\eqref{eq:sumPrep}}{=}  \left \langle c_1 + c_2, \left[ G_1 , G_2 \right], \left( \mathcal{E}_1, \overline{\mathcal{E}}_2 \right)	\right \rangle_Z.
	\end{split}
		\label{eq:proofMinSum}
	\end{equation}
\end{proof}
	
\begin{complexity} \normalfont The complexity for the addition of the center vectors is $\mathcal{O}(n)$, and the complexity for the construction of the tuple $\overline{\mathcal{E}}$ is $\mathcal{O}(\mu_2)$ with $\mu_2$ defined as in \eqref{eq:numListElements}. Thus, the overall complexity is $\mathcal{O}(n + \mu_2)$. \hfill $\square$
\end{complexity}

\subsubsection{Convex Hull} 

\begin{proposition}
	(Convex Hull) Given two sets in Z-representation $\mathcal{P}_1 = \langle c_1, G_1, \mathcal{E}_1 \rangle_Z$ and $\mathcal{P}_2 = \langle c_2, G_2,\mathcal{E}_2 \rangle_Z$, their convex hull is computed as 
	\begin{equation}
		\begin{split}
			& conv(\mathcal{P}_1,\mathcal{P}_2) = \left \langle \frac{1}{2} \left( c_1 + c_2 \right), \frac{1}{2} \left[ \left( c_1 - c_2 \right), G_1 , G_1 , G_2 , -G_2 \right], \left( ( p ), ~ \mathcal{E}_1, ~ \widehat{\mathcal{E}}_1, ~ \overline{\mathcal{E}}_2, ~ \widehat{\mathcal{E}}_2 \right)	\right \rangle_Z \\
			& ~ \\
			& \mathrm{with} ~~ \widehat{\mathcal{E}}_1 = \left( [ \mathcal{E}_{1(1)}^T , p ]^T ~,~ \dots ~,~ [ \mathcal{E}_{1(h_1)}^T , p ]^T  \right), \\
			& ~~~~~~~ \overline{\mathcal{E}}_2 = \left( \mathcal{E}_{2(1)} + \mathbf{1} ~ p_1 ~,~ \dots ~,~ \mathcal{E}_{2(h_2)} + \mathbf{1} ~ p_1  \right), \\
			& ~~~~~~~ \widehat{\mathcal{E}}_2 = \left( [ \overline{\mathcal{E}}_{2(1)}^T ,p ]^T ~,~ \dots ~,~ [ \overline{\mathcal{E}}_{2(h_2)}^T ,p ]^T  \right).
		\end{split}
		\label{eq:convexHullPrep}
	\end{equation}
	It holds that 
	\begin{equation}
		\begin{split}
			& p = p_1 + p_2 + 1 \\
			& h = 2 h_1 + 2 h_2 + 1 \\
			& \mu = 2 \mu_1 + 2 \mu_2 + h_1 + h_2 + 1,
		\end{split}
		\label{eq:changeProp}
	\end{equation}	
	where $p$ is the number of factors, $h$ the number of generators, and $\mu$ the number of tuple entries $($see \eqref{eq:numListElements}$)$ of $conv(\mathcal{P}_1,\mathcal{P}_2)$. The complexity is $\mathcal{O}(n + \mu_2)$, where $\mu_2$ is the number of entries in the tuple $\mathcal{E}_2$.
	\label{prop:convexHullPrep}
\end{proposition}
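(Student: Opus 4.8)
The plan is to verify the formula for $conv(\mathcal{P}_1,\mathcal{P}_2)$ by substituting the Z-representations of $\mathcal{P}_1$ and $\mathcal{P}_2$ into the definition of the convex hull in \eqref{eq:convHull}, then to rewrite the resulting parametrized family of points as a single Z-representation using the concatenation identity \eqref{eq:sumPrep}. Concretely, a generic point of $conv(\mathcal{P}_1,\mathcal{P}_2)$ has the form $\tfrac12(1+\lambda)s_1 + \tfrac12(1-\lambda)s_2$ with $s_1 \in \mathcal{P}_1$, $s_2 \in \mathcal{P}_2$, and $\lambda \in [-1,1]$. Writing $s_1 = c_1 + \sum_{i=1}^{h_1}\big(\prod_k \alpha_{\mathcal{E}_{1(i,k)}}\big)G_{1(\cdot,i)}$ and similarly for $s_2$ with fresh factors, and relabelling $\lambda =: \alpha_p$ where $p = p_1+p_2+1$, I would expand $\tfrac12(1+\alpha_p)s_1 + \tfrac12(1-\alpha_p)s_2$. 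The constant terms give $\tfrac12(c_1+c_2) + \tfrac12\alpha_p(c_1-c_2)$, which accounts for the new center and the first generator $\tfrac12(c_1-c_2)$ with index tuple $(p)$. The term $\tfrac12(1+\alpha_p)\sum_i(\cdots)G_{1(\cdot,i)}$ splits into $\tfrac12\sum_i(\cdots)G_{1(\cdot,i)}$ (monomials over $\mathcal{E}_1$, generators $\tfrac12 G_1$) plus $\tfrac12\sum_i \alpha_p(\cdots)G_{1(\cdot,i)}$ (the same monomials with $\alpha_p$ appended, i.e. index tuples $\widehat{\mathcal{E}}_1$, generators $\tfrac12 G_1$). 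Symmetrically, $\tfrac12(1-\alpha_p)\sum_i(\cdots)G_{2(\cdot,i)}$ splits into monomials over $\overline{\mathcal{E}}_2$ with generators $\tfrac12 G_2$ plus monomials over $\widehat{\mathcal{E}}_2$ with generators $-\tfrac12 G_2$; the shift by $p_1$ in $\overline{\mathcal{E}}_2$ is exactly the relabelling that makes the $\mathcal{P}_2$-factors disjoint from the $\mathcal{P}_1$-factors, as in the Minkowski-sum proof \eqref{eq:proofMinSum}. Collecting all five blocks and applying \eqref{eq:sumPrep} yields precisely the claimed tuple $\big((p),\mathcal{E}_1,\widehat{\mathcal{E}}_1,\overline{\mathcal{E}}_2,\widehat{\mathcal{E}}_2\big)$ and generator matrix $\tfrac12[(c_1-c_2),G_1,G_1,G_2,-G_2]$.

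Next I would confirm the bookkeeping in \eqref{eq:changeProp}. The factor count is $p = p_1 + p_2 + 1$ because the $\mathcal{P}_1$-factors are kept, the $\mathcal{P}_2$-factors are shifted into a disjoint range of size $p_2$, and one new factor $\alpha_p$ is introduced. The generator count is $h = 1 + h_1 + h_1 + h_2 + h_2 = 2h_1 + 2h_2 + 1$, reading off the five blocks. For $\mu$, the block $(p)$ contributes $1$; each of $\mathcal{E}_1$ and $\overline{\mathcal{E}}_2$ contributes $\mu_1$ and $\mu_2$ respectively; each of $\widehat{\mathcal{E}}_1$ and $\widehat{\mathcal{E}}_2$ appends one index to every element, contributing $\mu_1 + h_1$ and $\mu_2 + h_2$; summing gives $\mu = 2\mu_1 + 2\mu_2 + h_1 + h_2 + 1$. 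One should also note that the regularity/disjointness condition on factor indices required by Definition \ref{def:pRep} is respected within each element of the tuple, since appending the fresh index $p$ to an element of $\mathcal{E}_1$ or $\overline{\mathcal{E}}_2$ cannot create a repeat (the original elements used indices $\le p_1$ and $\le p$ respectively, and $p$ is new). For the complexity claim, the operations are: forming $\tfrac12(c_1+c_2)$ and $\tfrac12(c_1-c_2)$ in $\mathcal{O}(n)$; scaling $G_1$ and $G_2$ by $\tfrac12$ — but these scalings, together with the concatenations and the construction of $\widehat{\mathcal{E}}_1$, can be shown to fit in the stated bound once one observes (as the surrounding text does for the other operations) that only the rebuilding of $\overline{\mathcal{E}}_2$ via the shift by $p_1$ genuinely depends on $\mu_2$, giving $\mathcal{O}(n + \mu_2)$.

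The main obstacle, I expect, is not any single computation but getting the index-tuple algebra exactly right: one has to keep careful track of which $\alpha$-factors belong to $\mathcal{P}_1$, which to the relabelled $\mathcal{P}_2$, and which is the new $\alpha_p$, and to check that the "append $p$" operation producing $\widehat{\mathcal{E}}_1,\widehat{\mathcal{E}}_2$ from $\mathcal{E}_1,\overline{\mathcal{E}}_2$ interacts correctly with the distributive expansion of $\tfrac12(1\pm\alpha_p)$. A secondary subtlety worth a remark is the stated $\mathcal{O}(n+\mu_2)$ complexity: a literal reading of \eqref{eq:convexHullPrep} also touches $G_1$ and $\mathcal{E}_1$, so the bound should be understood in the same amortized sense as in the preceding propositions, where concatenations and initializations are treated as negligible per the conventions in Section \ref{subsec:Notation}. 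Everything else is a routine substitution and collection of terms, entirely parallel to the Minkowski-sum proof.
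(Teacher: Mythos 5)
Your proposal is correct and follows essentially the same route as the paper: insert the Z-representations into the definition \eqref{eq:convHull}, substitute $\lambda$ by a fresh factor $\alpha_p$ (valid since both range over $[-1,1]$), expand $\tfrac12(1\pm\alpha_p)$ into the five generator blocks, and apply \eqref{eq:sumPrep}, with the same bookkeeping for $p$, $h$, $\mu$ and the same complexity accounting. Your added remarks on the within-element distinctness of indices and on the $\tfrac12$-scaling of $G_1,G_2$ not being counted in the $\mathcal{O}(n+\mu_2)$ bound are sensible observations beyond what the paper states, but the argument itself is the same.
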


\begin{proof}
	The result follows from inserting the definition of the Z-representation in \eqref{eq:pRep} into the definition of the convex hull \eqref{eq:convHull} and using \eqref{eq:sumPrep}:
	\begin{equation}
	\begin{split}
		conv(\mathcal{P}_1 , \mathcal{P}_2) &= \Bigg\{ \underbrace{\frac{1}{2} (c_1 + c_2)}_{c} + \frac{1}{2} (c_1 - c_2) \lambda + \frac{1}{2} \sum_{i=1}^{h_1} \left( \prod _{k=1}^{m_{1,i}} \alpha _{\mathcal{E}_{1(i,k)}} \right) G_{1(\cdot,i)} ~ + \\
		& ~~~~~~~  \frac{1}{2} \sum_{i=1}^{h_1} \lambda \left( \prod _{k=1}^{m_{1,i}} \alpha _{\mathcal{E}_{1(i,k)}} \right) G_{1(\cdot,i)} + \frac{1}{2} \sum _{i=1}^{h_2} \Bigg( \prod _{k=1}^{m_{2,i}} \underbrace{\alpha _{\mathcal{E}_{2(i,k)} + p_1}}_{\alpha_{\overline{\mathcal{E}}_{2(i,k)}}} \Bigg) G_{2(\cdot,i)} ~ - \\
		& ~~~~~~~  \frac{1}{2} \sum_{i=1}^{h_2} \lambda \Bigg( \prod _{k=1}^{m_{2,i}} \underbrace{\alpha _{\mathcal{E}_{2(i,k)} + p_1}}_{\alpha_{\overline{\mathcal{E}}_{2(i,k)}}} \Bigg) G_{2(\cdot,i)} ~ \Bigg| ~ \alpha _{\mathcal{E}_{1(i,k)}} \in [-1,1],~\alpha_{\overline{\mathcal{E}}_{2(i,k)}} \in [-1,1], ~ \lambda \in [-1,1] \Bigg\} \\
		& ~ \\
		& \overset{\overset{\overset{\eqref{eq:sumPrep}}{\mathrm{and}}}{\alpha_p := \lambda}}{=} \left \langle \frac{1}{2} \left( c_1 + c_2 \right), \frac{1}{2} \left[ \left( c_1 - c_2 \right), G_1 , G_1 , G_2 , -G_2 \right], \left( ( p ), ~ \mathcal{E}_1, ~ \widehat{\mathcal{E}}_1, ~ \overline{\mathcal{E}}_2, ~ \widehat{\mathcal{E}}_2 \right)	\right \rangle_Z,
	\end{split}
	\label{eq:proofConvHullPrep}
	\end{equation}
	where $p = p_1 + p_2 + 1$ (see \eqref{eq:changeProp}). For the transformation in the last line of \eqref{eq:proofConvHullPrep}, we substitute $\lambda$ with an additional factor $\alpha_p$. With this substitution and $\hat{\mathcal{E}}_1$ and $\hat{\mathcal{E}}_2$ defined as in \eqref{eq:convexHullPrep}, it holds that
	\begin{equation*}
		\begin{split}
			& \lambda \prod_{k=1}^{m_{1,i}} \alpha_{\mathcal{E}_{1(i,k)}} = \alpha_p \prod_{k=1}^{m_{1,i}} \alpha_{\mathcal{E}_{1(i,k)}} = \prod_{k=1}^{\hat{m}_{1,i} = m_{1,i} + 1} \alpha_{\hat{\mathcal{E}}_{1(i,k)}}, \\
			& \lambda \prod_{k=1}^{m_{2,i}} \alpha_{\overline{\mathcal{E}}_{2(i,k)}} = \alpha_p \prod_{k=1}^{m_{2,i}} \alpha_{\overline{\mathcal{E}}_{2(i,k)}} = \prod_{k=1}^{\hat{m}_{2,i} = m_{2,i} + 1} \alpha_{\hat{\mathcal{E}}_{2(i,k)}}.
		\end{split}
	\end{equation*}
	Since $\lambda \in [-1,1]$ and $\alpha_p \in [-1,1]$, substituting $\lambda$ with $\alpha_p$ does not change the set. 
	
	The number of generators $h$ in \eqref{eq:changeProp} directly follows from the construction of the generator matrix $0.5[c_1-c_2,G_1,G_1,G_2,-G_2]$. The number of tuple entries $\mu$ in \eqref{eq:changeProp} results from the construction of the tuple $( ( p ), ~ \mathcal{E}_1, ~ \widehat{\mathcal{E}}_1, ~ \overline{\mathcal{E}}_2, ~ \widehat{\mathcal{E}}_2 )$, since $\mathcal{E}_1$ has $\mu_1$ entries, $\hat{\mathcal{E}}_1$ has $\mu_1 + h_1$ entries, $\overline{\mathcal{E}}_2$ has $\mu_2$ entries, and $\hat{\mathcal{E}}_2$ has $\mu_2 + h_2$ entries.
\end{proof}	
	
\begin{complexity} \normalfont The complexity for the addition and subtraction of the center vectors is $\mathcal{O}(2n)$, and the complexity for the construction of the set $\overline{\mathcal{E}}_2$ is $\mathcal{O}(\mu_2)$ with $\mu_2$ defined as in \eqref{eq:numListElements}. Thus, the overall complexity is $\mathcal{O}(n + \mu_2)$. \hfill $\square$
\end{complexity}

\subsubsection{Intersection}

Contrary to previous set operations where the computation in Z-representation is straightforward, the calculation of the intersection is non-trivial. At present, there exists no algorithm to compute the intersection directly in Z-representation without conversion to another polytope representation.

\subsection{Representation of Bounded Polytopes}
\label{subsec:RepBoundedPolytopes}

We now formulate and prove the main theorem of our paper:
\begin{theorem}
	Every bounded polytope can be equivalently represented in Z-representation.
	\label{theo:mainTheorem} 
\end{theorem}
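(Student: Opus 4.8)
\quad The plan is to reduce the theorem entirely to the convex-hull operation of Proposition~\ref{prop:convexHullPrep}. Starting from a V-representation $\mathcal{P} = \langle [v_1,\dots,v_q] \rangle_V$ of the bounded polytope, I would first note that a single vertex is trivially a set in Z-representation, $\{v_i\} = \langle v_i, [~], \emptyset \rangle_Z$, since with $h = 0$ and $p = 0$ the sum in \eqref{eq:pRep} is empty. Then I would build the sequence $\mathcal{Q}_1 := \{v_1\}$ and $\mathcal{Q}_j := conv(\mathcal{Q}_{j-1}, \{v_j\})$ for $j = 2, \dots, q$, evaluating every convex hull by the closed formula \eqref{eq:convexHullPrep}. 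Because that formula returns a set in Z-representation whenever both of its arguments are, an immediate induction shows that $\mathcal{Q}_q$ is a set in Z-representation, so all that remains is to identify $\mathcal{Q}_q$ with $\mathcal{P}$.

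For that identification the first step is the elementary observation that the binary operation \eqref{eq:convHull}, applied to two nonempty \emph{convex} sets $\mathcal{S}_1, \mathcal{S}_2$, returns exactly the ordinary convex hull $conv(\mathcal{S}_1 \cup \mathcal{S}_2)$: the substitution $t = \tfrac{1}{2}(1+\lambda) \in [0,1]$ rewrites \eqref{eq:convHull} as $\{\, t s_1 + (1-t) s_2 \mid s_1 \in \mathcal{S}_1,\ s_2 \in \mathcal{S}_2,\ t \in [0,1] \,\}$; one inclusion is obvious, and conversely any convex combination of points of $\mathcal{S}_1 \cup \mathcal{S}_2$ splits into its $\mathcal{S}_1$-part and its $\mathcal{S}_2$-part, each lying in the corresponding convex set. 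Since every $\mathcal{Q}_{j-1}$ is a bounded polytope — hence nonempty and convex — and so is $\{v_j\}$, applying this fact together with $conv(conv(A) \cup B) = conv(A \cup B)$ inductively gives $\mathcal{Q}_q = conv(\{v_1, \dots, v_q\}) = \mathcal{P}$.

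Finally I would check that the construction never leaves the object class of Definition~\ref{def:pRep}. The index tuples start out empty, so the within-monomial distinctness condition in \eqref{eq:pRep} holds vacuously; and each application of \eqref{eq:convexHullPrep} merely shifts the indices of the second operand up by $p_1$ and appends the one fresh index $p = p_1 + p_2 + 1$, which is strictly larger than every index already present, so no monomial ever repeats a factor index and all exponents remain in $\{0,1\}$. The Z-representation obtained this way is usually far from regular in the sense of \eqref{eq:regular}, which Definition~\ref{def:pRep} explicitly allows.

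I expect the only step carrying any real content to be the convexity lemma of the second paragraph, and it is routine; everything else is bookkeeping on top of propositions already established. What this argument does \emph{not} deliver is a compact representation — a naive left-folded iteration yields $h = 2^{q-1} - 1$ generators, and even combining the convex hulls in a balanced binary tree only reduces this to $h = \mathcal{O}(q^2)$ with $p = q - 1$ — so the genuinely interesting work of producing small Z-representations, as well as the reverse conversion, is deferred to the conversion algorithms and the appendix.
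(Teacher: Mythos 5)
Your proposal is correct and follows essentially the same route as the paper: represent each vertex as $\langle v_i,[~],\emptyset\rangle_Z$ and repeatedly apply the convex-hull closure of Proposition~\ref{prop:convexHullPrep} (the paper folds the vertices via Algorithm~\ref{alg:Vrep2Prep}'s binary tree rather than a left fold, but that is immaterial to the theorem). Your explicit verification that the binary operation \eqref{eq:convHull} on nonempty convex sets yields $conv(\mathcal{S}_1\cup\mathcal{S}_2)$ is a detail the paper leaves implicit, and it is correct.
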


\begin{proof}
	If the polytope is bounded, then the set can be described as the convex hull of its vertices (see Def. \ref{def:Vrep}). Each vertex $v_i \in \mathbb{R}^n$ can be equivalently represented by the Z-representation $v_i = \langle v_i, [~], \emptyset \rangle_Z$. Since the Z-representation is closed under the convex hull operation as shown in Prop. \ref{prop:convexHullPrep}, computation of the convex hull results in a Z-representation of the polytope. \hfill $\square$
\end{proof}
An algorithm for the conversion of a polytope in V-representation to a polytope in Z-representation is provided in the next subsection.

\subsection{Conversion to Z-Representation}
\label{subsec:Vrep2Prep}

Since none of the existing polytope representations permits efficient computation for all basic set operations, efficient algorithms for the conversion between different representations are important. We first provide an algorithm that converts a polytope from V-representation to Z-representation:

\begin{algorithm}[H]
	\caption{Conversion from V-representation to Z-representation} \label{alg:Vrep2Prep}
	\textbf{Require:} Bounded polytope in V-representation $\mathcal{P} = \langle [v_1,\dots,v_q] \rangle _{V}$	
	
	\textbf{Ensure:} Z-representation $\mathcal{P} = \langle c,G,\mathcal{E} \rangle_Z$ of the polytope
	\begin{algorithmic}[1]
		\State $\mathcal{K} = \emptyset$
		\For{$i \gets 1$ to $q$} \label{line:beginForLoop1}
			\State $\mathcal{K} \gets \big( \mathcal{K},~ \langle v_i,[~],\emptyset \rangle_Z \big)$
		\EndFor \label{line:endForLoop1}
		\While{$|\mathcal{K}| > 1$}	\label{line:beginOuterWhile}		
			\State $\widehat{\mathcal{K}} \gets \emptyset$				
			\While{$|\mathcal{K}| \geq 2$} \label{line:beginInnerWhile}
				\State $\widehat{\mathcal{K}} \gets \big( \widehat{\mathcal{K}} ~, conv(\mathcal{K}_{(1)}, \mathcal{K}_{(2)}) \big)$
				\If{$|\mathcal{K}| > 2$}				
					\State $\mathcal{K} \gets \big(\mathcal{K}_{(3)}, \dots, \mathcal{K}_{(|\mathcal{K}|)}\big)$
				\Else
					\State $\mathcal{K} \gets \emptyset$
				\EndIf
			\EndWhile \label{line:endInnerWhile}
			\If{$|\mathcal{K}| == 1$}
				\State $\widehat{\mathcal{K}} \gets \big( \widehat{\mathcal{K}}, ~ \mathcal{K}_{(1)} \big)$
			\EndIf
			\State $\mathcal{K} \gets \widehat{\mathcal{K}}$
		\EndWhile \label{line:endOuterWhile}	
		\State $\langle c,G,\mathcal{E} \rangle_Z \gets \mathcal{K}_{(1)}$
	\end{algorithmic}
\end{algorithm}

\noindent The algorithm is structured as follows: First, all vertices of the polytope are converted to Z-representation during the for-loop in lines \ref{line:beginForLoop1}-\ref{line:endForLoop1}, and the result is stored in the tuple $\mathcal{K}$. The remainder of the algorithm can then be viewed as the exploration of a binary tree as it is shown in Fig. \ref{fig:AlgDescription}, where the nodes of the tree are polytopes in Z-representation. Each iteration of the outer while-loop in lines \ref{line:beginOuterWhile}-\ref{line:endOuterWhile} of Alg. \ref{alg:Vrep2Prep} represents the exploration of one level of the tree. For each of these levels, the inner while-loop in lines \ref{line:beginInnerWhile}-\ref{line:endInnerWhile} visits all nodes at one level and computes the convex hull of two nodes to form one node of the next higher level of the tree. If the root node of the binary tree is reached, all polytope vertices have been united by the convex hull, and the root element is the desired Z-representation of the polytope $\mathcal{P}$. We demonstrate Alg. \ref{alg:Vrep2Prep} with a short example:

\begin{example}
	The conversion of the polytope 
	\begin{equation*}
		\mathcal{P} = \left \langle \left [ \begin{bmatrix} 0 \\ 5 \end{bmatrix},\begin{bmatrix} 3 \\ 6 \end{bmatrix}, \begin{bmatrix} 4 \\ 5 \end{bmatrix}, \begin{bmatrix} 5 \\ 1 \end{bmatrix}, \begin{bmatrix} 2 \\ 0 \end{bmatrix}, \begin{bmatrix} 0 \\ 2 \end{bmatrix} \right ] \right \rangle_V,
	\end{equation*}
	from V-representation to Z-representation with Alg. \ref{alg:Vrep2Prep} is visualized in Fig. \ref{fig:ExampleAlgorithm}. The algorithm terminates after $3$ iterations.
	\label{ex:algorithm}
\end{example}

\begin{figure}
	\centering
  	\includegraphics[width = 0.9 \textwidth]{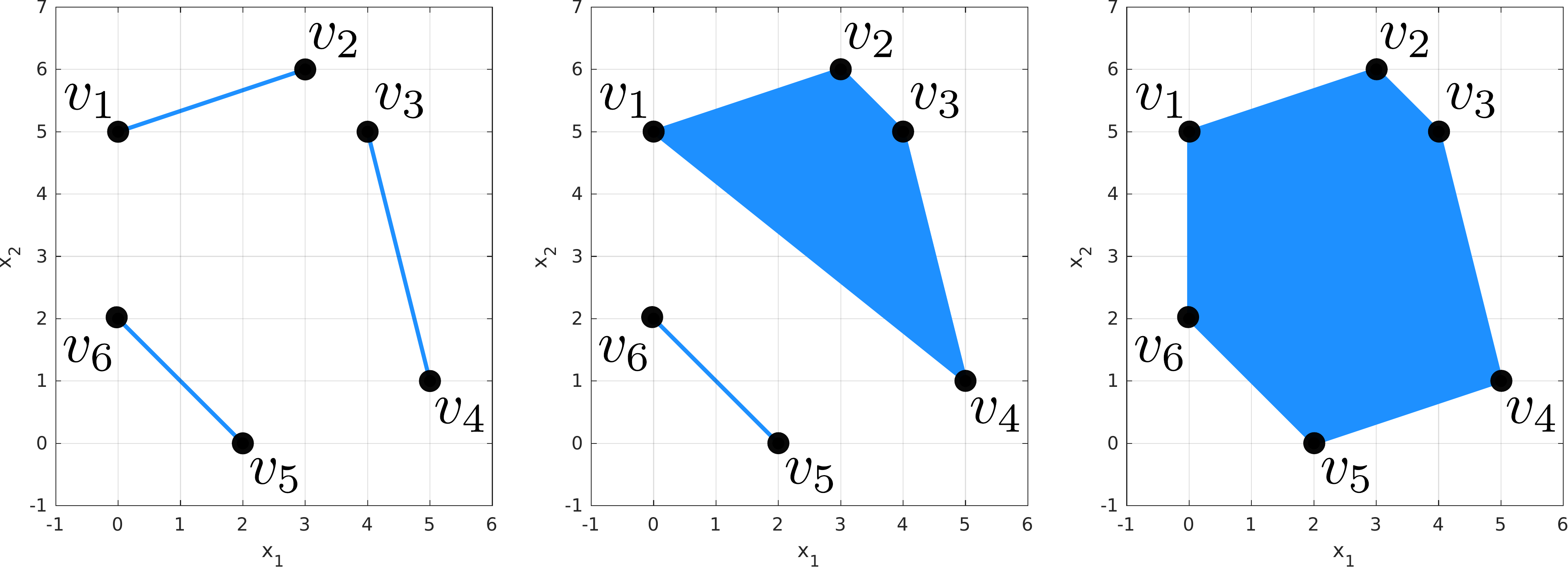}
	\captionof{figure}{First (left), second (middle), and third (right) iteration of Alg. \ref{alg:Vrep2Prep} applied to the polytope $\mathcal{P}$ from Example \ref{ex:algorithm}.}
	\label{fig:ExampleAlgorithm}
\end{figure}

The Z-representation of a polytope is not unique. If Alg. \ref{alg:Vrep2Prep} is used for the conversion of a polytope given in V-representation, then the resulting Z-representation depends on the order of the vertices in the matrix $[v_1,\dots,v_q]$, since this order defines which vertices are combined by the convex hull operation. Therefore, it might be meaningful to sort the matrix $[v_1,\dots,v_q]$ before applying Alg. \ref{alg:Vrep2Prep} in order to obtain a Z-representation with desirable properties. For example, to minimize the length of the vectors in the generator matrix of the Z-representation, the vertices have to be sorted so that vertices located close to each other are combined first. The computational complexity of the algorithm can be derived as follows:

\begin{proposition}
	The computational complexity of the conversion from V-representation to Z-representation using Alg. \ref{alg:Vrep2Prep} is $\mathcal{O}(q^2 \log_2(q) + nq)$ with respect to $q$, where $n$ is the dimension and $q$ is the number of polytope vertices.
\end{proposition}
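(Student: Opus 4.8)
The plan is to analyze Algorithm~\ref{alg:Vrep2Prep} by separately bounding the cost of the initialization loop and the cost of the binary-tree traversal, then combining the two. First I would handle the for-loop in lines~\ref{line:beginForLoop1}--\ref{line:endForLoop1}: it performs $q$ iterations, and each iteration only stores a vertex $v_i \in \mathbb{R}^n$ as the trivial Z-representation $\langle v_i,[~],\emptyset\rangle_Z$, which costs $\mathcal{O}(n)$; concatenations are free by the convention stated in Section~\ref{subsec:Notation}. Hence this phase contributes $\mathcal{O}(nq)$.

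Next I would bound the nested while-loops. The binary tree has $\lceil \log_2(q) \rceil$ levels, so the outer while-loop in lines~\ref{line:beginOuterWhile}--\ref{line:endOuterWhile} runs $\mathcal{O}(\log_2(q))$ times. The main work in each level is the repeated calls to $conv(\cdot,\cdot)$ in the inner while-loop. By the Complexity statement following Proposition~\ref{prop:convexHullPrep}, a single convex hull $conv(\mathcal{P}_1,\mathcal{P}_2)$ costs $\mathcal{O}(n+\mu_2)$, where $\mu_2$ is the number of tuple entries of the second argument. The crux is therefore to track how the parameters $h$ and $\mu$ of the intermediate Z-representations grow as we ascend the tree. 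Using the recursion $\mu = 2\mu_1 + 2\mu_2 + h_1 + h_2 + 1$ and $h = 2h_1 + 2h_2 + 1$ from~\eqref{eq:changeProp}, together with the base case $h=0$, $\mu=0$ for each leaf, I would show by induction on the level $\ell$ (counting leaves as level $0$) that a node obtained by merging $2^\ell$ vertices satisfies $h = \mathcal{O}(4^\ell)$ and $\mu = \mathcal{O}(4^\ell \ell)$, or more simply that at level $\ell$ one has $h, \mu = \mathcal{O}(q^2)$ uniformly, since $2^\ell \le q$ and the recursion at most quadruples the size parameters while adding lower-order terms. Thus every $conv$ call at any level costs $\mathcal{O}(n + q^2)$.

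Finally I would count the calls: at level $\ell$ of the outer loop there are roughly $q/2^\ell$ convex-hull computations, so the total number of $conv$ calls over the whole algorithm is $\sum_{\ell \ge 1} q/2^\ell = \mathcal{O}(q)$. Combining, the tree-traversal cost is at most $\mathcal{O}(q)\cdot\mathcal{O}(n+q^2)$; however, a more careful accounting is needed to reach the claimed $\mathcal{O}(q^2\log_2(q) + nq)$ bound. The sharper argument is to sum the $\mu_2$-terms level by level: at level $\ell$ the Z-representations have $\mu = \mathcal{O}((q/2^{\ell-1})\cdot 2^{\ell-1}) = \mathcal{O}(q)$ entries \emph{per combined pair of subtrees} in amortized terms — more precisely, since the total number of vertices is $q$ and the generator/tuple sizes of sibling nodes at a level sum to $\mathcal{O}(q)$-per-pair only after accounting for the doubling, one gets that the sum of all $\mu_2$ over the $\mathcal{O}(q/2^\ell)$ calls at level $\ell$ is $\mathcal{O}(q^2)$, and summing over the $\mathcal{O}(\log_2 q)$ levels yields $\mathcal{O}(q^2\log_2 q)$; the $\mathcal{O}(n)$-per-call terms sum to $\mathcal{O}(nq)$ over all $\mathcal{O}(q)$ calls. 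Adding the initialization cost $\mathcal{O}(nq)$ gives the stated $\mathcal{O}(q^2\log_2(q) + nq)$.

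The main obstacle I anticipate is getting the growth bookkeeping for $\mu$ exactly right: the recursion in~\eqref{eq:changeProp} roughly quadruples $\mu$ at each merge, which naively suggests exponential blowup, and the resolution is that the number of merges along any root-to-leaf path is only $\log_2 q$, so the relevant products telescope to polynomial size; making this precise — and reconciling it with the per-level summation so that the $\log_2 q$ factor appears exactly once rather than being swallowed into a larger power — is the delicate part, and I would do it by an explicit induction establishing, say, $h_\ell \le (q/2^{\ell} \text{-pairing})\cdot$const and $\mu_\ell \le c\, \ell \cdot (\text{node vertex count})^2 / (\text{something})$, then verifying the level sums directly.
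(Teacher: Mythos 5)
Your skeleton is the same as the paper's: model Alg.~\ref{alg:Vrep2Prep} as a (perfect) binary tree, use the recursion \eqref{eq:changeProp} with base case $h^{(0)}=\mu^{(0)}=0$ to control the growth of $h$ and $\mu$ up the tree, charge each of the $2^{k-i}$ convex-hull calls at level $i$ a cost of $\mathcal{O}(n+\mu^{(i-1)})$ via Prop.~\ref{prop:convexHullPrep}, and sum over levels. The paper closes the bookkeeping you flag as "delicate" by simply solving the recursions in closed form, $h^{(i)}=(4^i-1)/3$ and $\mu^{(i)}=4^i(i/6+1/9)-1/9$, and then evaluating $\sum_{i=1}^{k}2^{k-i}\bigl(2n+\mu^{(i-1)}\bigr)$ exactly with geometric-series identities, which is cleaner than the amortized argument you sketch.

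However, two of your intermediate claims are wrong as stated, even though the final bound is correct. First, "every $conv$ call at any level costs $\mathcal{O}(n+q^2)$" fails at the top of the tree: there $\mu_2=\mu^{(k-1)}=\Theta(4^{k-1}k)=\Theta(q^2\log_2 q)$, so the single root-level call already costs $\Theta(n+q^2\log_2 q)$. Second, the per-level sum of the $\mu_2$-terms is not $\mathcal{O}(q^2)$ uniformly; at level $\ell$ it is $2^{k-\ell}\mu^{(\ell-1)}=\Theta\bigl(q\,2^{\ell}\,\ell\bigr)$, which at $\ell=k$ is again $\Theta(q^2\log_2 q)$. The reason the total is nevertheless $\mathcal{O}(q^2\log_2 q)$ is not "$\mathcal{O}(q^2)$ per level times $\log_2 q$ levels" but that the per-level costs grow geometrically in $\ell$, so the whole sum is dominated (up to a constant) by the top level alone; the $\mathcal{O}(n)$-per-call terms separately sum to $\mathcal{O}(nq)$ as you say. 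With that correction, and with the reduction of the general case to a perfect tree of depth $\lceil\log_2 q\rceil$ (which you invoke implicitly and the paper handles via $\lceil\log_2 q\rceil=\log_2 q+a$, $a\in[0,1]$), your argument matches the paper's.
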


\begin{proof} The for-loop in lines \ref{line:beginForLoop1}-\ref{line:endForLoop1} of Alg. \ref{alg:Vrep2Prep} can be ignored since it only involves initializations. Let us first consider the case where $q = 2^k$, $k \in \mathbb{N}$: each iteration of the outer while-loop in lines \ref{line:beginOuterWhile}-\ref{line:endOuterWhile} of Alg. \ref{alg:Vrep2Prep} corresponds to one level of a perfect binary tree with depth $k = \log_2(q)$ (see Fig. \ref{fig:AlgDescription}). Each node at level $i = 0 \dots k$ is a polytope in Z-representation $\mathcal{P}^{(i)} = \langle c^{(i)}, G^{(i)}, \mathcal{E}^{(i)} \rangle_Z$ with $G^{(i)} \in \mathbb{R}^{n \times h^{(i)}}$, and $\mu^{(i)}$ denoting the number of entries in the list $\mathcal{E}^{(i)}$ (see \eqref{eq:numListElements}). From \eqref{eq:changeProp} we can derive the number of factors $p^{(i)}$, the number of generators $h^{(i)}$, and the number of tuple entries $\mu^{(i)}$ of a node at level $i$ of the binary tree; the values for a perfect binary tree on level $i$ are:

\begin{equation}
	\begin{split}
		& p^{(i)} = 2 p^{(i-1)} + 1 = 2^i p^{(0)} + \sum_{j=0}^{i-1} 2^j \\
		& h^{(i)} = 4 h^{(i-1)} + 1 = 4^i h^{(0)} + \sum_{j=0}^{i-1} 4^j \\
		& \mu^{(i)} = 4 \mu^{(i-1)} + 2 h^{(i-1)} + 1 = 4 \mu^{(0)} + \sum_{j=0}^{i-1} 4^j \left( 1 + 2 h^{(i-1-j)} \right).
	\end{split}
	\label{eq:paramPrepIni}
\end{equation}
The nodes at the bottom level of the binary tree are the polytope vertices $v_l$ represented in Z-representation $v_l = \langle v_l, [~], \emptyset \rangle_Z$, so that $p^{(0)} = 0$, $h^{(0)} = 0$, and $\mu^{(0)} = 0$. Inserting these values into \eqref{eq:paramPrepIni} and using the finite sum of the geometric series
\begin{equation*}
	\sum_{j=0}^z r^j = \frac{1-r^{z+1}}{1-r}, ~~ r \in \mathbb{R},
\end{equation*}
one obtains 
\begin{equation}
	\begin{split}
		& p^{(i)} = \sum_{j=0}^{i-1} 2^j \overset{\mathrm{geom.}}{\overset{\mathrm{series}}{=}} 2^i - 1 \\
		& h^{(i)} = \sum_{j=0}^{i-1} 4^j \overset{\mathrm{geom.}}{\overset{\mathrm{series}}{=}} \frac{4^i - 1}{3} \\
		& \mu^{(i)} = \sum_{j=0}^{i-1} 4^j \left(1 + \frac{2}{3}(4^{i-1-j} - 1)\right) = \frac{1}{3} \sum_{j=0}^{i-1} 4^j + \frac{2}{3} \sum_{j=0}^{i-1} 4^{i-1} \overset{\mathrm{geom.}}{\overset{\mathrm{series}}{=}} 4^i \left( \frac{1}{6} i + \frac{1}{9} \right) - \frac{1}{9}.
	\end{split}
	\label{eq:paramPrep}
\end{equation}
Each level $i$ of the tree contains $2^{k-i}$ nodes, where $k$ is the depth of the tree. Consequently, at each level $2^{k-i}$ convex hull operations have to be computed, where each operation involves $2n + \mu^{(i-1)}$ additions (see Prop. \ref{prop:convexHullPrep}). The number of necessary basic operations $O$ required for the conversion with Alg. \ref{alg:Vrep2Prep} is therefore 
\begin{equation}
	O = \sum_{i=1}^{k} 2^{k-i} \left(2n + \mu^{(i-1)} \right).
	\label{eq:nrOfOperationsBasic}
\end{equation}
In the general case the binary tree explored by Alg. \ref{alg:Vrep2Prep} is not a perfect binary tree. However, the number of operations required for the general case is obviously smaller than the number of operations required for the exploration of a perfect binary tree with depth $\lceil \log_2(q) \rceil$. Since it holds that $\lceil \log_2(q) \rceil = \log_2(q) + a$, $a \in [0,1]$, inserting $k=\lceil \log_2(q) \rceil = \log_2(q) + a$ for the tree depth in \eqref{eq:nrOfOperationsBasic} yields 
\begin{equation*}
    \begin{split}
	& ~ O = \sum_{i=1}^{\log_2(q) + a} 2^{\log_2(q)+a-i} \left(2n + \mu^{(i-1)} \right) = 2^a \sum_{i=1}^{\log_2(q) + a}  \frac{q}{2^i} \left(2n + \mu^{(i-1)} \right) \\
	& ~ \\
	&  \overset{\overset{\mu^{(i-1)}}{\mathrm{from}~ \eqref{eq:paramPrep}}}{=} 2^a \left(2n-\frac{1}{9}\right)q \underbrace{\sum_{i=1}^{\log_2(q)+a} \frac{1}{2^i}}_{\overset{\mathrm{geom.}}{\overset{\mathrm{series}}{=}} 1-\frac{1}{2^a q}} - \frac{2^a}{72}q \underbrace{\sum_{i=1}^{\log_2(q)+a} 2^i}_{\overset{\mathrm{geom.}}{\overset{\mathrm{series}}{=}} 2 (2^a q-1)} + \frac{2^a}{24} q \underbrace{\sum_{i=1}^{\log_2(q)+a} i \cdot 2^i}_{\overset{\mathrm{(see}~\text{\cite[Chapter 1.2.2.3]{Jeffrey2008}} \mathrm{)}}{= 2(1 + 2^a q(a-1) + 2^a \log_2(q)q)}} \\
	& ~~~~ \\
	& ~~~~ = \frac{4^a}{12} q^2 \log_2(q) + \frac{4^a}{36} (3a-4) q^2 + 2^{a+1}n q - 2n + \frac{1}{9}.
	\end{split}
	\label{eq:nrOfOperations}
\end{equation*}  
The complexity of Alg. \ref{alg:Vrep2Prep} is therefore $\mathcal{O}(q^2 \log_2(q) + nq)$ with respect to $q$. \hfill $\square$
\end{proof}

\begin{figure}
	\centering
  	\includegraphics[width = 0.6 \textwidth]{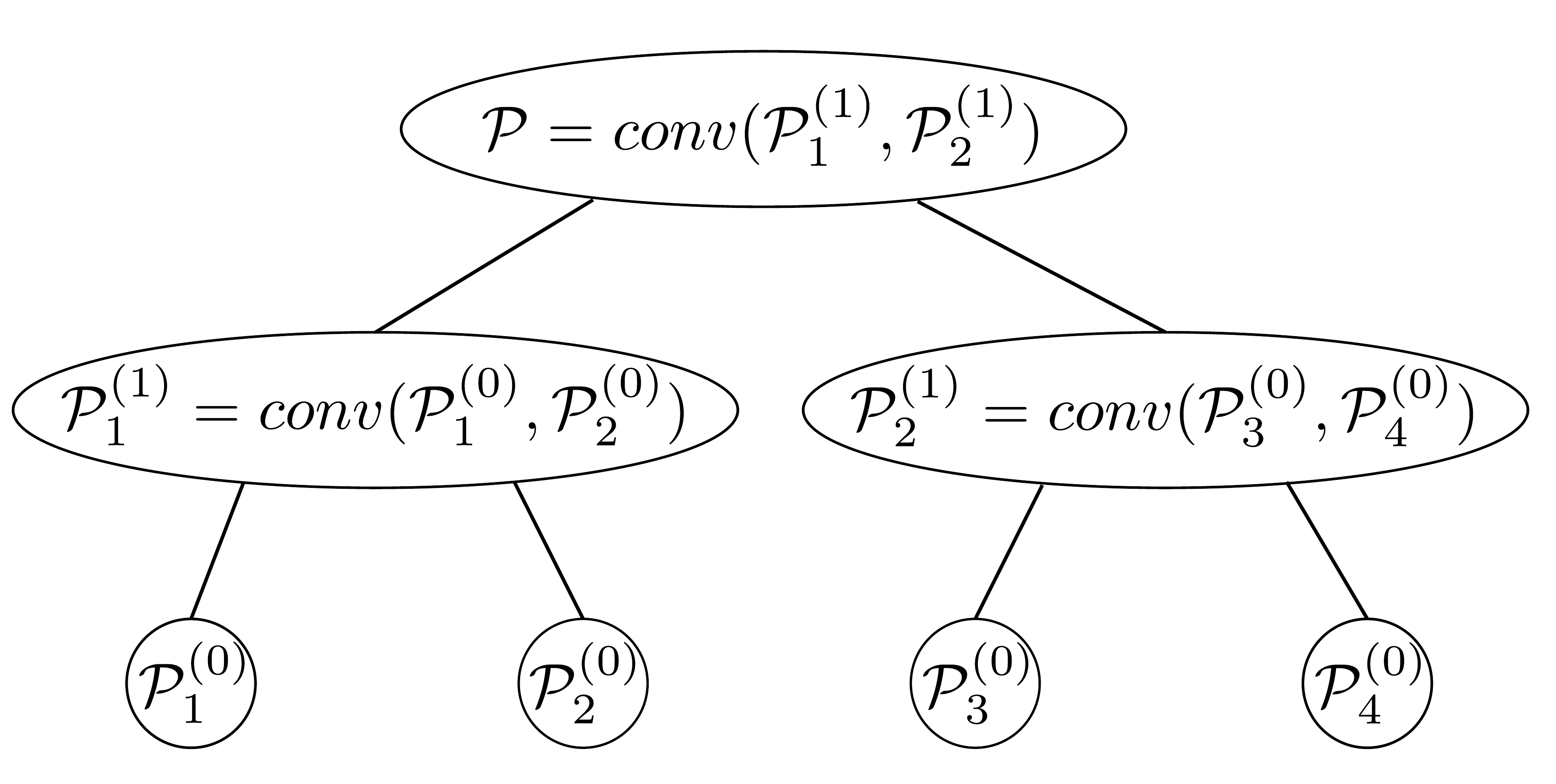}
	\captionof{figure}{Example of a perfect binary tree as explored by Alg. \ref{alg:Vrep2Prep}.}
	\label{fig:AlgDescription}
\end{figure}

\subsection{Conversion to V-Representation}

In this subsection we present an algorithm that converts a polytope in Z-representation to one in V-representation. Our algorithm is based on the following proposition:
\begin{proposition}
	Given a polytope $\mathcal{P} = \langle c, G, \mathcal{E} \rangle_Z$ in Z-representation, the polytope vertices are a subset of the finite set $\mathcal{K}$ defined as  
	\begin{equation*}
		\mathcal{K} = \left\{ c + \sum _{i=1}^h \left( \prod _{k=1}^{m_i} \widehat{\alpha}_{\mathcal{E}_{(i,k)}} \right) G_{(\cdot,i)} ~\bigg| ~ \widehat{\alpha} = [ \widehat{\alpha}_1, \dots, \widehat{\alpha}_p ]^T \in \operator{vertices}(\mathcal{I}) \right\} ,
	\end{equation*}
	where the operation \operator{vertices} returns the $2^p$ vertices of the hypercube $\mathcal{I} = [-\mathbf{1},\mathbf{1}] \subset \mathbb{R}^p$.
	\label{prop:verticesHypercube}
\end{proposition}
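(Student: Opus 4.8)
The plan is to exploit the multilinearity of the map that sends factor values $\alpha \in \mathcal{I} = [-1,1]^p$ to the corresponding point of $\mathcal{P}$. Write
\[
    f(\alpha) = c + \sum_{i=1}^h \left( \prod_{k=1}^{m_i} \alpha_{\mathcal{E}_{(i,k)}} \right) G_{(\cdot,i)} .
\]
Since the regularity condition \eqref{eq:regular} is not assumed here, a given factor $\alpha_j$ may occur in several monomials, but within each monomial it occurs at most once (the distinctness condition on $e_i$). Hence, for every fixed $j$, holding all other factors fixed, $f$ is an \emph{affine} (degree-one) function of $\alpha_j$ alone; equivalently $f$ restricted to each axis-parallel edge of $\mathcal{I}$ is affine. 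This is the structural fact the whole argument rests on.

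First I would show that any point $x = f(\alpha)$ with some coordinate $\alpha_j \in (-1,1)$ strictly interior cannot be a vertex of $\mathcal{P} = f(\mathcal{I})$. Fix such a point and consider $g(t) = f(\alpha + t\,\mathbf{e}_j)$ where $\mathbf{e}_j$ is the $j$-th unit vector; by the structural fact $g$ is affine in $t$, and for $t$ in a small symmetric interval $[-\delta,\delta]$ around $0$ we stay inside $\mathcal{I}$, so $g([-\delta,\delta]) \subseteq \mathcal{P}$ is a (possibly degenerate) line segment with $x$ at its midpoint. If this segment is non-degenerate, $x$ lies in the relative interior of a segment contained in $\mathcal{P}$ and is therefore not an extreme point, hence not a vertex. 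If the segment is degenerate, i.e. $g$ is constant in $t$, then I can simply replace $\alpha$ by the point obtained by pushing $\alpha_j$ to $+1$ (or $-1$) without changing $f(\alpha)$; repeating this coordinate by coordinate, every vertex of $\mathcal{P}$ is the image $f(\widehat\alpha)$ of some $\widehat\alpha$ all of whose coordinates lie in $\{-1,1\}$, i.e. $\widehat\alpha \in \operator{vertices}(\mathcal{I})$. That places every vertex in $\mathcal{K}$, and $|\mathcal{K}| \le 2^p$ is finite, which proves the claim.

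The steps in order: (1) state and justify the per-coordinate affineness of $f$ from the Z-representation definition and the within-monomial distinctness of factor indices; (2) for an arbitrary point $x=f(\alpha)$, show that if $x$ is a vertex (extreme point) of the convex polytope $\mathcal{P}$ then along each coordinate direction the affine segment through $x$ must be degenerate, for otherwise $x$ is a midpoint of a nondegenerate segment in $\mathcal{P}$; (3) use degeneracy in direction $j$ to move $\alpha_j$ to an endpoint of $[-1,1]$ without changing $f(\alpha)$, and iterate over $j=1,\dots,p$ to land on a hypercube vertex; (4) conclude $\operator{vertices}(\mathcal{P}) \subseteq \mathcal{K}$, and note $|\mathcal{K}| \le 2^p$.

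The main obstacle is the bookkeeping in step (2)--(3) when a coordinate direction gives a degenerate segment: one must be careful that pushing $\alpha_j$ to an endpoint neither changes the image point nor spoils the degeneracy already achieved for previously processed coordinates. This is handled cleanly by observing that each such move only alters $\alpha_j$ and leaves $f$ unchanged, so it commutes with the earlier moves; a short induction on the number of coordinates not yet in $\{-1,1\}$ finishes it. A secondary point to state explicitly is that a vertex of a polytope is exactly an extreme point, so that exhibiting $x$ as the midpoint of a nondegenerate segment in $\mathcal{P}$ genuinely rules it out as a vertex.
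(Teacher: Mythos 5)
Your proof is correct, but it takes a genuinely different route from the paper's. The paper argues via supporting linear functionals: for each vertex $v^{(j)}$ of $\mathcal{P}$ it invokes the existence of a direction $d_j$ with $v^{(j)} = \argmax_{s \in \mathcal{P}} d_j^T s$ (citing Padberg), lifts this to a scalar optimization of the multilinear function $d_j^T f(\alpha)$ over the hypercube $\mathcal{I}$, and observes that since $\partial f/\partial \alpha_i$ does not depend on $\alpha_i$, the maximum is attained at a hypercube vertex. You instead work directly with the characterization of vertices as extreme points: per-coordinate affineness means that any coordinate $\alpha_j \in (-1,1)$ either produces a nondegenerate segment through $f(\alpha)$ inside $\mathcal{P}$ (ruling out extremality) or can be pushed to $\pm 1$ without changing the image. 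Both arguments rest on the same structural fact --- each factor appears at most once per monomial, so the map is affine in each coordinate separately --- but yours is self-contained (no external citation needed) and handles the degenerate case explicitly, which the paper glosses over: its assertion that the extremum is reached ``at either $\alpha_i^* = 1$ or $\alpha_i^* = -1$'' silently subsumes the case where $f$ is constant in $\alpha_i$ and the maximizer is non-unique, which is exactly the bookkeeping your step (3) makes precise. The paper's version is shorter; yours is more careful. One minor remark: your worry about the endpoint-pushing move ``spoiling the degeneracy already achieved'' is a non-issue, since each iteration of the argument is applied afresh at the updated point and only needs the count of interior coordinates to decrease --- the simple induction you describe already suffices without any commutativity observation.
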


\begin{proof}
	We have to show that each vertex $v^{(j)}$ of the polytope $\mathcal{P}$ corresponds to one vertex $\widehat{\alpha}^{(j)}$ of the hypercube $\mathcal{I}$:
	\begin{equation}
		v^{(j)} = c + \sum _{i=1}^h \left( \prod _{k=1}^{m_i} \widehat{\alpha}_{\mathcal{E}_{(i,k)}}^{(j)} \right) G_{(\cdot,i)}.
		\label{eq:correspVert}
	\end{equation}	
As shown in \cite[Chapter 7.2(d)]{Padberg2013}, for each vertex $v^{(j)}$ of a polytope $\mathcal{P}$ there exists a vector $d_j \in \mathbb{R}^n$ such that 
	\begin{equation}
		v^{(j)} = \argmax_{s \in \mathcal{P}} ~~ d_j^T s.
		\label{eq:polyVertProj}
	\end{equation} 
	By using \eqref{eq:correspVert}, \eqref{eq:polyVertProj} can be equivalently formulated as 
	\begin{equation}
		\begin{split}
		& v^{(j)} = c + \sum _{i=1}^h \left( \prod _{k=1}^{m_i} \alpha_{\mathcal{E}_{(i,k)}}^* \right) G_{(\cdot,i)} \\
		& \mathrm{with} ~~ [\alpha^*_1,\dots,\alpha^*_p]^T = \argmax_{[\alpha_1, \dots, \alpha_p]^T \in \mathcal{I}} \underbrace{d_j^T \left( c + \sum _{i=1}^h \left( \prod _{k=1}^{m_i} \alpha_{\mathcal{E}_{(i,k)}}\right) G_{(\cdot,i)} \right)}_{f(\alpha_1,\dots,\alpha_p)}.
		\end{split}
		\label{eq:polyFunc}
	\end{equation}
We therefore have to show that the point $\alpha^*=[\alpha^*_1,\dots,\alpha^*_p]^T$ where the function $f(\cdot)$ from \eqref{eq:polyFunc} reaches its extremum within the domain $[\alpha_1, \dots, \alpha_p]^T \in \mathcal{I}$ is identical to a vertex $\widehat{\alpha}^{(j)}$ of the hypercube $\mathcal{I}$. Since the function $f(\cdot)$ from \eqref{eq:polyFunc} does not contain polynomial exponents greater than $1$ (see \eqref{eq:pRep}), it holds that the partial derivative of $f(\cdot)$ with respect to variable $\alpha_i$ does not depend on $\alpha_i$: 

\begin{equation}
	\forall ~ i \in \{1,\dots,p \}: ~~ \frac{\partial f(\alpha_1,\dots,\alpha_p)}{\partial \alpha_i} = g_i(\alpha_1,\dots,\alpha_{i-1},\alpha_{i+1},\dots,\alpha_p).
	\label{eq:partialDer}
\end{equation}
According to \eqref{eq:partialDer} the function $f(\cdot)$ therefore reaches its extremum on the domain $\alpha_i \in [-1,1]$ at either $\alpha_i^* = 1$ or $\alpha_i^* = -1$. Since this holds for all $\alpha_i$, $i = 1,\dots,p$, function $f(\cdot)$ reaches its extremum within the domain $[\alpha_1, \dots, \alpha_p]^T \in \mathcal{I}$ at the point $\alpha^*=[\alpha^*_1,\dots,\alpha^*_p]^T$ with $\alpha_j^* \in \{-1,1\},~ j = 1,\dots,p$, which is a vertex of the hypercube $\mathcal{I}$. \hfill $\square$
\end{proof}

\begin{algorithm}[H]
	\caption{Conversion from Z-representation to V-representation} \label{alg:Prep2Vrep}
	\textbf{Require:} Bounded polytope in Z-representation $\mathcal{P} = \langle c, G, \mathcal{E} \rangle_Z$
	
	\textbf{Ensure:} V-representation $\mathcal{P} = \langle [v_1,\dots,v_q] \rangle_V$ of the polytope
	\begin{algorithmic}[1]
		\State $\mathcal{I} \gets [-\mathbf{1},\mathbf{1}] \subset \mathbb{R}^p$
		\State $\big\{ \widehat{\alpha}^{(1)}, \dots, \widehat{\alpha}^{(2^p)}\big\} \gets \operator{vertices}(\mathcal{I})$ \label{line:vertices}
		\State $\mathcal{K} \gets \emptyset$
		\For{$j \gets 1$ to $2^p$} \label{line:beginForLoop}
			\State $v \gets  c + \sum _{i=1}^h \left( \prod _{k=1}^{m_i} \widehat{\alpha}^{(j)}_{\mathcal{E}_{(i,k)}} \right) G_{(\cdot,i)}$ \label{line:potVertices}
			\If{$v \notin \mathcal{K}$} \label{line:checkContain}
				\State $\mathcal{K} \gets \mathcal{K} \cup v$
			\EndIf
		\EndFor \label{line:endForLoop}
		\State $[v_1,\dots,v_q] \gets \operator{convexHull}(\mathcal{K})$ \label{line:convHull}
		\State $\mathcal{P} \gets \langle [v_1,\dots,v_q] \rangle _{V}$
	\end{algorithmic}
\end{algorithm}

Alg. \ref{alg:Prep2Vrep} shows, based on Prop. \ref{prop:verticesHypercube}, the conversion from Z-representation to V-representation. The operation \operator{vertices} in lines \ref{line:vertices} of Alg. \ref{alg:Prep2Vrep} returns the vertices of the hypercube $\mathcal{I}$. In the for-loop in line \ref{line:beginForLoop}-\ref{line:endForLoop} of Alg. \ref{alg:Prep2Vrep} we compute the potential polytope vertices for each of these hypercube vertices according to Prop. \ref{prop:verticesHypercube}. We check if the potential vertex $v$ is already part of the set $\mathcal{K}$ in line \ref{line:checkContain} since this decreases the average runtime of the algorithm. The points stored in $\mathcal{K}$ define a redundant V-representation of the polytope $\mathcal{P}$. Redundant points are removed by computation of the convex hull in line \ref{line:convHull} of Alg. \ref{alg:Prep2Vrep}, where the operation \operator{convexHull} returns the vertices of the convex hull. For Z-representations that define non-convex sets (see e.g., Example \ref{ex:pRep2}), Alg. \ref{alg:Prep2Vrep} returns the convex hull of the set. 

Since it is required for the derivation of the computation complexity, we first derive a formula for the maximum number of generators and the maximum number of tuple entries for a regular Z-representation with a fixed number of factors:

\begin{proposition}
	Given a regular Z-representation $\mathcal{P} = \langle c,G,\mathcal{E} \rangle_Z$ with $p$ factors, it holds that 
	\begin{equation}
	\begin{split}
		& h(p) \leq 2^p - 1 \\
		& \mu(p) \leq p ~ 2^{p-1}
	\end{split}
		\label{eq:propOfP}
	\end{equation}
	are upper bounds for the number of generators $h(p)$ and the number of entries $\mu(p)$ in the tuple $\mathcal{E}$.
	\label{prop:numGen}
\end{proposition}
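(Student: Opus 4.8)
The plan is to identify each generator with the \emph{set} of factor indices it involves and then reduce both claims to elementary counting of subsets of $\{1,\dots,p\}$. To the $i$-th generator $G_{(\cdot,i)}$ I would associate the set $S_i := \{\,\mathcal{E}_{(i,1)},\dots,\mathcal{E}_{(i,m_i)}\,\}\subseteq\{1,\dots,p\}$. Because the definition of the Z-representation in \eqref{eq:pRep} forbids repetitions inside each $e_i$ (the condition $j\neq k \Rightarrow e_{i(j)}\neq e_{i(k)}$), the entries of $e_i$ are pairwise distinct, so $|S_i| = m_i$ and the variable part $\prod_{k=1}^{m_i}\alpha_{\mathcal{E}_{(i,k)}}$ of the $i$-th monomial depends only on $S_i$.

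Next I would show that for a regular Z-representation the sets $S_1,\dots,S_h$ are pairwise distinct and nonempty. Nonemptiness is without loss of generality: a generator with $m_i=0$ has constant variable part $1$ and may be absorbed into the starting point $c$. For distinctness, observe that permuting the entries of any $e_i$ changes neither the monomial $\prod_k \alpha_{\mathcal{E}_{(i,k)}}$ nor the set in \eqref{eq:pRep}; hence we may assume each $e_i$ is listed in increasing order, and with this normalization $S_i = S_j$ is equivalent to $e_i = e_j$ as tuples, which the regularity condition \eqref{eq:regular} rules out. (If one does not normalize, two generators with $S_i = S_j$ can simply be merged by adding their columns of $G$, which only decreases $h$ and $\mu$, so the normalized case bounds the general one.) Thus $i\mapsto S_i$ is an injection from the $h$ generators into the $2^p-1$ nonempty subsets of $\{1,\dots,p\}$, which gives $h(p)\le 2^p-1$.

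For the bound on $\mu$ I would count incidences between generators and factors: $\mu = \sum_{i=1}^{h} m_i = \sum_{i=1}^{h} |S_i| = \#\{(i,\ell)\ :\ 1\le i\le h,\ \ell\in S_i\}$. Summing over $\ell$ first, note that for each fixed $\ell\in\{1,\dots,p\}$ the sets $S_i$ that contain $\ell$ are distinct subsets of $\{1,\dots,p\}$ containing $\ell$, and there are exactly $2^{p-1}$ such subsets, so at most $2^{p-1}$ generators involve $\alpha_\ell$. Hence $\mu(p)\le p\,2^{p-1}$. (Equivalently $\sum_i |S_i|\le \sum_{\emptyset\neq S\subseteq\{1,\dots,p\}}|S| = \sum_{k=1}^{p} k\binom{p}{k} = p\,2^{p-1}$, the last equality being the standard identity obtained by differentiating $(1+x)^p$ at $x=1$.)

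The only genuinely delicate point is the bookkeeping in the second paragraph: translating the tuple-level regularity condition \eqref{eq:regular} into distinctness of the associated index sets $S_i$, and disposing of a possible constant ($m_i=0$) generator. Once these normalizations are in place, both inequalities are immediate from counting subsets of $\{1,\dots,p\}$.
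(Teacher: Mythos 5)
Your proof is correct and takes essentially the same route as the paper's: both bound $h$ by the number of distinct monomial variable parts, identified with the $2^p-1$ nonempty subsets of $\{1,\dots,p\}$, and both bound $\mu$ via $\sum_{k=1}^{p} k\binom{p}{k}=p\,2^{p-1}$. Your extra care in translating the tuple-level regularity condition \eqref{eq:regular} into distinctness of the associated index sets (and in absorbing a possible $m_i=0$ generator into $c$) addresses a point the paper's proof passes over silently, but it does not change the substance of the argument.
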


\begin{proof}
	For a regular Z-representation (see \eqref{eq:regular}), the maximum number of generators is equal to the number of different monomial variable parts $\alpha_{\mathcal{E}_{(i,1)}} \cdot \ldots \cdot \alpha_{\mathcal{E}_{(i,m_i)}}$ that can be constructed with $p$ factors $\alpha_{\mathcal{E}_{(i,k)}}$. Given a fixed $m_i$, there exist ${p \choose m_i}$ different monomial variable parts since there are ${p \choose m_i}$ possible combinations to choose $m_i$ from the $p$ factors $\alpha _{\mathcal{E}_{(i,k)}}$ without order. Summation over all $m_i \in \{ 1,\dots,p \}$ therefore yields
	\begin{equation*}
		h(p) \leq \sum_{m_i=1}^p {p \choose m_i} = \left( \sum_{m_i=0}^p {p \choose i } \right)-1 \overset{\text{\cite[Chapter~8.6~(7.)]{Rade2013}}}{=} 2^p - 1.
	\end{equation*}
	For each monomial, $m_i$ entries have to be stored in the tuple $\mathcal{E}$. Consequently,
	\begin{equation*}
		\mu(p) \leq \sum_{m_i=1}^p m_i {p \choose m_i} = \sum_{m_i=0}^p m_i {p \choose m_i } \overset{\text{\cite[Chapter~8.6~(8.)]{Rade2013}}}{=} p ~ 2^{p-1}
	\end{equation*}
	is an upper bound for the number of entries $\mu$ in the tuple $\mathcal{E}$. \hfill $\square$
\end{proof}

Next, we derive the computational complexity of Alg. \ref{alg:Prep2Vrep}:

\begin{proposition}
	The computational complexity of the conversion of a polytope $\mathcal{P} \subset \mathbb{R}^n$ from a regular Z-representation to V-representation with Alg. \ref{alg:Prep2Vrep} is $\mathcal{O}((2^p)^{\lfloor n/2 \rfloor + 1} + 4^p (p + n))$ with respect to $p$, where $n$ is the dimension and $p$ is the number of factors of the Z-representation.
\end{proposition}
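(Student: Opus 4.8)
The plan is to account separately for the three nested costs in Algorithm~\ref{alg:Prep2Vrep}: enumerating the $2^p$ hypercube vertices in line~\ref{line:vertices}, evaluating the polynomial in line~\ref{line:potVertices} together with the membership test in line~\ref{line:checkContain} for each of them, and finally the convex hull computation in line~\ref{line:convHull}. For the second cost, each evaluation of $v = c + \sum_{i=1}^h (\prod_{k=1}^{m_i}\widehat\alpha_{\mathcal{E}_{(i,k)}})G_{(\cdot,i)}$ involves $\mathcal{O}(\mu)$ multiplications to form the variable parts, $\mathcal{O}(nh)$ operations to scale and sum the generators, and $\mathcal{O}(n)$ for the addition of $c$; the membership test against the set $\mathcal{K}$ built so far costs $\mathcal{O}(n\,|\mathcal{K}|) = \mathcal{O}(n\,2^p)$ in the worst case. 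Multiplying by the $2^p$ iterations of the for-loop and substituting the bounds $h \le 2^p - 1$ and $\mu \le p\,2^{p-1}$ from Prop.~\ref{prop:numGen} gives a contribution of order $2^p(\mu + nh + n\,2^p) = \mathcal{O}(4^p(p+n))$, which is one of the two terms in the claimed bound.

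For the convex hull step I would invoke the standard worst-case complexity of convex hull algorithms in fixed dimension: the convex hull of $N$ points in $\mathbb{R}^n$ can be computed in time $\mathcal{O}(N^{\lfloor n/2\rfloor})$ by the upper bound theorem / gift-wrapping-type bounds (this is the part I'd cite, e.g. from the computational geometry literature on the facet enumeration problem referenced in the introduction). Here $N = |\mathcal{K}| \le 2^p$, since $\mathcal{K}$ is a subset of the hypercube-vertex images by Prop.~\ref{prop:verticesHypercube}. Hence this step costs $\mathcal{O}((2^p)^{\lfloor n/2 \rfloor})$; combining with the small amount of extra bookkeeping needed to read off the vertices gives $\mathcal{O}((2^p)^{\lfloor n/2\rfloor + 1})$ to be safe about the additive logarithmic/linear factors hidden in such algorithms, which is the first term. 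Adding the two contributions and absorbing lower-order terms yields the stated complexity $\mathcal{O}((2^p)^{\lfloor n/2 \rfloor + 1} + 4^p(p+n))$.

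The main obstacle is pinning down the exact complexity of the \operator{convexHull} subroutine in a way that is both honest and clean. In dimension $n$ the number of facets of a polytope with $N$ vertices can itself be as large as $\Theta(N^{\lfloor n/2\rfloor})$ by the upper bound theorem, so any convex hull algorithm that outputs the facet structure is inherently at least that expensive in the worst case; the factor $(2^p)^{\lfloor n/2\rfloor + 1}$ is chosen to dominate the output-sensitive bounds of algorithms such as the one underlying \cite{Kaibel2003}. A secondary subtlety is that the bound is stated "with respect to $p$," so $n$ is treated as a parameter and constants depending on $n$ (for instance in the exponent of the convex hull cost) may be suppressed; I would make this convention explicit at the start of the proof so that the substitution of the Prop.~\ref{prop:numGen} bounds and the collapse of terms like $4^p p + 4^p n + 2n$ into $\mathcal{O}(4^p(p+n))$ is unambiguous.
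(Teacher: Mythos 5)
Your proposal is correct and follows essentially the same decomposition as the paper: a per-iteration cost of $\mathcal{O}(\mu + nh + n\,2^p)$ over the $2^p$ loop iterations, bounded via Prop.~\ref{prop:numGen} to give $\mathcal{O}(4^p(p+n))$, plus a convex hull computation on at most $2^p$ points. The only difference is in the convex hull step: the paper cites the Beneath--Beyond algorithm, whose complexity is exactly $\mathcal{O}(w^{\lfloor n/2 \rfloor + 1})$ for $w$ points in $\mathbb{R}^n$, so the exponent $\lfloor n/2 \rfloor + 1$ appears directly rather than as a safety margin over the tighter $\mathcal{O}(w^{\lfloor n/2 \rfloor})$ output bound you invoke.
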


\begin{proof}
	 Upper bounds for the number of generators $h(p)$ and the number of entries $\mu(p)$ in the tuple $\mathcal{E}$ for a regular Z-representation with $p$ factors are given by Prop. \ref{prop:numGen}. The hypercube $\mathcal{I} = [-\mathbf{1},\mathbf{1}] \subset \mathbb{R}^{p}$ has $2^{p}$ vertices. Therefore, the for-loop in lines \ref{line:beginForLoop}-\ref{line:endForLoop} of Alg. \ref{alg:Prep2Vrep} consists of $2^{p}$ iterations. In each iteration, the potential vertex $v$ has to be calculated according to Prop. \ref{prop:verticesHypercube}. This requires $n h(p)$ additions and at most $\mu(p) -1 + nh(p)$ multiplications, resulting for all iterations in 
\begin{equation}
	O_1 = 2^{p} \left( 2nh(p) + \mu(p) - 1 \right) \overset{\eqref{eq:propOfP}}{\leq} 4^p \left(\frac{1}{2}p + 2n \right) - 2n~ 2^p - 1
	\label{eq:complexity1}
\end{equation}
basic operations. Furthermore, checking if the point is already contained in the tuple $\mathcal{K}$ storing the potential vertices in line \ref{line:checkContain} of Alg. \ref{alg:Prep2Vrep} requires, in the worst case,
\begin{equation}
	O_2 = \sum_{i=1}^{2^{p(q)}} n (i-1) \overset{\text{\cite[Chapter~1.2.2.1]{Jeffrey2008}}}{=} \frac{n}{2} \left( 4^{p} - 2^{p} \right)
	\label{eq:complexity2}
\end{equation}
comparisons of scalar numbers. The complexity of the computations in the for-loop in lines \ref{line:beginForLoop}-\ref{line:endForLoop} of Alg. \ref{alg:Prep2Vrep} is therefore $\mathcal{O}(O_1 + O_2) = \mathcal{O}(4^p (p + n))$ with respect to $p$. The Beneath-Beyond algorithm \cite{Kallay1981} for the computation of the convex hull of a $n$-dimensional point cloud with $w$ points has complexity $\mathcal{O}(w^{\lfloor n/2 \rfloor + 1})$ \cite[Theorem 3.16]{Preparata2012}. For Alg. \ref{alg:Prep2Vrep}, the point cloud stored in the set $\mathcal{K}$ consists of $w = 2^p$ points in the worst case, resulting in complexity $\mathcal{O}((2^p)^{\lfloor n/2 \rfloor + 1})$ with respect to $p$ for the computation of the convex hull in line \ref{line:convHull} of Alg. \ref{alg:Prep2Vrep}. Combining the complexity from the for-loop and from the convex hull computation results in an overall complexity of
	\begin{equation}
		\mathcal{O}(O_1 + O_2) + \mathcal{O}((2^p)^{\lfloor n/2 \rfloor + 1}) = \mathcal{O}(4^p (p + n)) + \mathcal{O}((2^p)^{\lfloor n/2 \rfloor + 1})
		\label{eq:overallComplexity}
	\end{equation} 
	with respect to $p$. \hfill $\square$
\end{proof}

For general Z-representations, it is not possible to specify a relation between the number of polytope vertices and the number of factors. However, under the assumption that the Z-representation is obtained by conversion from V-representation with Alg. \ref{alg:Vrep2Prep}, the number of factors can be expressed as a function of the number of vertices, which enables us to derive the computation complexity with respect to the number of polytope vertices:

\begin{proposition}
	The computational complexity of the conversion of a polytope $\mathcal{P}$ from Z-representation to V-representation with Alg. \ref{alg:Prep2Vrep}, where $\mathcal{P}$ is computed by conversion from V-representation with Alg. \ref{alg:Vrep2Prep}, is $\mathcal{O}(16^q n + (0.5 \cdot 4^q)^{\lfloor n/2 \rfloor + 1})$ with respect to $q$, where $n$ is the dimension and $q$ is the number of polytope vertices.
\end{proposition}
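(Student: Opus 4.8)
The plan is to express the representation size of the Z-polytope produced by Alg.~\ref{alg:Vrep2Prep} as a function of the vertex number $q$, and then to re-run the cost accounting of Alg.~\ref{alg:Prep2Vrep} with these values.

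First I would bound the number of factors $p$, the number of generators $h$, and the number of tuple entries $\mu$ of the output of Alg.~\ref{alg:Vrep2Prep}. As in the complexity proof of Alg.~\ref{alg:Vrep2Prep}, the binary tree explored by the algorithm has depth at most $\lceil \log_2 q \rceil$, and its root parameters are dominated by those of the root of a perfect binary tree of that depth; evaluating \eqref{eq:paramPrep} at level $i = \lceil \log_2 q \rceil$ together with $2^{\lceil \log_2 q \rceil} \le 2q$ yields $p \le 2q - 1$, $h = \mathcal{O}(q^2)$, and $\mu = \mathcal{O}(q^2 \log_2 q)$; in particular $2^p \le 0.5 \cdot 4^q$ and $4^p \le 0.25 \cdot 16^q$. (The output is in fact regular, since the convex hull in Prop.~\ref{prop:convexHullPrep} preserves regularity --- each newly created monomial either carries the fresh factor $\alpha_p$ or lies in a factor range disjoint from that of its sibling --- but the decisive point here is that $h$ and $\mu$ grow only polynomially in $q$, far below the generic bounds of Prop.~\ref{prop:numGen}.)

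Next I would substitute these bounds into the analysis of Alg.~\ref{alg:Prep2Vrep}. The for-loop in lines~\ref{line:beginForLoop}--\ref{line:endForLoop} runs over the $2^p$ hypercube vertices. Evaluating each candidate vertex in line~\ref{line:potVertices} costs $\mathcal{O}(nh + \mu)$ operations, so this part of the loop amounts to $O_1 = \mathcal{O}\!\big(2^p (nh + \mu)\big) = \mathcal{O}\!\big(4^q q^2 (n + \log_2 q)\big)$, whereas the duplicate checks in line~\ref{line:checkContain} cost, as in \eqref{eq:complexity2}, $O_2 = \tfrac{n}{2}(4^p - 2^p) = \mathcal{O}(16^q n)$; since $O_1 = o(O_2)$, the loop as a whole costs $\mathcal{O}(16^q n)$. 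The concluding convex-hull computation in line~\ref{line:convHull} is applied to at most $2^p \le 0.5 \cdot 4^q$ points in $\mathbb{R}^n$ and hence, by the Beneath--Beyond bound \cite[Theorem~3.16]{Preparata2012}, costs $\mathcal{O}\!\big((2^p)^{\lfloor n/2 \rfloor + 1}\big) = \mathcal{O}\!\big((0.5 \cdot 4^q)^{\lfloor n/2 \rfloor + 1}\big)$. Adding the two contributions gives the claimed complexity $\mathcal{O}\!\big(16^q n + (0.5 \cdot 4^q)^{\lfloor n/2 \rfloor + 1}\big)$ with respect to $q$.

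I expect the main obstacle to be the first step: carrying the recursion \eqref{eq:paramPrep} over from the idealised perfect binary tree to the actual, possibly unbalanced tree of Alg.~\ref{alg:Vrep2Prep}, and extracting honest upper bounds on $p$, $h$, and $\mu$ in terms of $q$ alone. Once these polynomial bounds on $h$ and $\mu$ are in hand, the rest is routine: the $nh$ and $\mu$ contributions to the per-iteration cost are dominated, so the exponential behaviour of Alg.~\ref{alg:Prep2Vrep} is driven purely by the $2^p = \mathcal{O}(4^q)$ hypercube vertices processed in the loop and by the final convex-hull call on at most that many candidate points.
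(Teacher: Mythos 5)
Your proposal is correct and follows essentially the same route as the paper: both bound $p$, $h$, and $\mu$ of the output of Alg.~\ref{alg:Vrep2Prep} by evaluating the perfect-binary-tree recursion \eqref{eq:paramPrep} at depth $\lceil \log_2 q\rceil = \log_2 q + a$ (giving $p \le 2q-1$, $h = \mathcal{O}(q^2)$, $\mu = \mathcal{O}(q^2\log_2 q)$) and then substitute these into the loop cost $O_1$, the duplicate-check cost $O_2$ from \eqref{eq:complexity2}, and the Beneath--Beyond bound on at most $2^p$ points. The only cosmetic difference is that you absorb $O_1$ into $O_2$ via $O_1 = o(O_2)$ where the paper carries both terms explicitly; the conclusion is identical.
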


\begin{proof} We first express the number of factors $p$, the number of generators $h$, and the number of list elements $\mu$ (see \eqref{eq:numListElements}) of a polytope in Z-representation as functions of the number of polytope vertices $q$. As stated in the proposition, we assume that the Z-representation of the polytope is obtained by conversion from V-representation using Alg. \ref{alg:Vrep2Prep}. As shown in Sec. \ref{subsec:Vrep2Prep}, we obtain over-approximations for $p$, $h$, and $\mu$ if we view Alg. \ref{alg:Vrep2Prep} as the exploration of a perfect binary tree with depth $k = \lceil \log_2(q) \rceil$. Since it holds that $\lceil \log_2(q) \rceil = \log_2(q) + a$, $a \in [0,1]$, we insert the tree depth $\log_2(q) + a$ into \eqref{eq:paramPrep} to obtain
\begin{equation}
	\begin{split}
		& p(q) = p^{(\log_2(q)+a)} = 2^a q - 1 \\
		& h(q)= h^{(\log_2(q)+a)} = \frac{4^a q^2-1}{3} \\
		& \mu(q) = \mu^{(\log_2(q)+a)} = 4^a q^2  \left( \frac{1}{6} \log_2(q) + \frac{1}{6} a + \frac{1}{9} \right) - \frac{1}{9}.
	\end{split}
	\label{eq:propVert}
\end{equation} 
The overall complexity for the conversion with Alg. \ref{alg:Prep2Vrep} is given by \eqref{eq:overallComplexity}. According to  \eqref{eq:complexity1} and \eqref{eq:complexity2}, it holds that
\begin{equation}
	O_1 = 2^{p(q)} \left( 2nh(q) + \mu(q) - 1 \right) \overset{\eqref{eq:propVert}}{=} (4^a)^{q} \left( 4^a q^2 \left( \frac{1}{3} n + \frac{1}{12} a + \frac{1}{18} \right) + \frac{1}{12} 4^a q^2 \log_2(q) - \frac{1}{3} n - \frac{5}{9}  \right)
	\label{eq:newComplexity1}
\end{equation}
and
\begin{equation}
	O_2 = \frac{n}{2} \left( 4^{p(q)} - 2^{p(q)} \right) \overset{\eqref{eq:propVert}}{=} \frac{n}{4} (16^a)^q - \frac{n}{2} (4^a)^q.
	\label{eq:newComplexity2}
\end{equation}
Inserting \eqref{eq:newComplexity1} and \eqref{eq:newComplexity2} into \eqref{eq:overallComplexity} yields
\begin{equation*}
	\mathcal{O}(O_1 + O_2) + \mathcal{O}((2^{p(q)})^{\lfloor n/2 \rfloor + 1}) \overset{\eqref{eq:propVert},\eqref{eq:newComplexity1},\eqref{eq:newComplexity2}}{=} \mathcal{O}((16^a)^q n) + \mathcal{O}((0.5 (4^a)^q)^{\lfloor n/2 \rfloor + 1})
\end{equation*}
with respect to $q$, which is $\mathcal{O}(16^q n + (0.5 \cdot 4^q)^{\lfloor n/2 \rfloor + 1})$. \hfill $\square$
\end{proof}


\section{Representation Complexity}

In this section, we compare the representation complexity in V-representation, H-representation, and Z-representation for two special classes of polytopes. Given a polytope $\mathcal{P}$, we denote by $N_V(\mathcal{P})$, $N_H(\mathcal{P})$ and $N_Z(\mathcal{P})$ the number of values required for V-, H-, and Z-representation, respectively. Furthermore, we denote the number of $i$-dimensional polytope faces by $F_i(\mathcal{P})$. We first derive the representation complexity for V-, H-, and Z-representation:

\begin{proposition}
The representation complexity of a polytope $\mathcal{P}$ with $q$ vertices in V-representation is 
\begin{equation*}
	N_V(\mathcal{P}) = nq.
\end{equation*}
\label{prop:repCompVrep}
\end{proposition}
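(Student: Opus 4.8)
The plan is to simply count the real numbers that must be stored to specify the V-representation according to Definition~\ref{def:Vrep}. By that definition, the V-representation of $\mathcal{P}$ is determined entirely by its list of vertices $v_1,\dots,v_q$, which we collect into the matrix $[v_1,\dots,v_q]$; the defining coefficients $\beta_i$ are not part of the stored data but range over the standard simplex, so they contribute nothing to the representation size. Hence the only thing to do is observe that $[v_1,\dots,v_q]\in\mathbb{R}^{n\times q}$.

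First I would recall that each vertex $v_i$ lies in $\mathbb{R}^n$, so storing it requires exactly $n$ real values. Since the representation consists of the $q$ vertices $v_1,\dots,v_q$, and there are no further data (no center, no auxiliary index structure, unlike the Z-representation), the total number of stored values is $\sum_{i=1}^{q} n = nq$. This gives $N_V(\mathcal{P}) = nq$, as claimed.

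There is essentially no obstacle here: the statement is a direct consequence of the definition of the V-representation, and the only point worth making explicit is that we are counting a genuinely non-redundant V-representation (the $q$ points really are the vertices of $\mathcal{P}$), so that $q = F_0(\mathcal{P})$ and no reduction of the count is possible. If anything, the mild subtlety is purely bookkeeping — making sure that the convention "number of values required" refers to the entries of the vertex matrix and not, say, to bits or to the coefficients $\beta_i$ — and this is settled by appealing to the shorthand $\mathcal{P} = \langle [v_1,\dots,v_q]\rangle_V$ introduced after Definition~\ref{def:Vrep}.
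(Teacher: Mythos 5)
Your proposal is correct and matches the paper's proof, which simply notes that each of the $q$ vertices is a vector with $n$ entries, giving $N_V(\mathcal{P}) = nq$. The extra remarks about the coefficients $\beta_i$ and non-redundancy are fine but not needed.
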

\begin{proof}
 For each of the $q$ vertices, a vector with $n$ entries has to be stored. \hfill $\square$
\end{proof}

\begin{proposition}
The representation complexity of an $n$-dimensional polytope $\mathcal{P}$ in H-representation is 
\begin{equation*}
	N_H(\mathcal{P}) = (n+1) F_{n-1}(\mathcal{P}).
\end{equation*}
\label{prop:repCompHrep}
\end{proposition}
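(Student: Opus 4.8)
The plan is to reduce the count to the well-known correspondence between the inequalities of an irredundant H-representation and the facets of a full-dimensional polytope. First I would assume, as is implicit in the statement, that the H-representation $\mathcal{P} = \{x \mid Ax \leq b\}$ with $A \in \mathbb{R}^{h \times n}$, $b \in \mathbb{R}^h$ is the minimal one, i.e.\ no inequality $A_{(i,\cdot)} x \leq b_{(i)}$ can be dropped without enlarging $\mathcal{P}$. I would then invoke the standard result from polytope theory (e.g.\ the same reference base used elsewhere in the paper, such as \cite{Grotschel2003,Padberg2013}) that for an $n$-dimensional (full-dimensional) polytope the irredundant inequalities are in one-to-one correspondence with the facets, i.e.\ the $(n-1)$-dimensional faces. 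Hence $h = F_{n-1}(\mathcal{P})$.

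The second step is a direct storage count: for each of the $h$ halfspaces, the row $A_{(i,\cdot)} \in \mathbb{R}^n$ contributes $n$ values and the corresponding offset $b_{(i)} \in \mathbb{R}$ contributes one more, giving $n+1$ values per halfspace. Multiplying by the number of halfspaces yields $N_H(\mathcal{P}) = (n+1)\,h = (n+1) F_{n-1}(\mathcal{P})$, which is the claim.

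The only delicate point — and the main obstacle — is the justification that each irredundant inequality defines exactly one facet and vice versa; this needs full-dimensionality of $\mathcal{P}$ (otherwise the equality hull must be stored separately and the bound changes), and it needs the observation that two distinct irredundant inequalities cannot be supporting the same facet (after normalization they would be scalar multiples, contradicting irredundancy). Since the proposition speaks of an "$n$-dimensional polytope" and of its "H-representation" in the minimal sense, I would state these assumptions explicitly at the start of the proof and then the counting argument is immediate.
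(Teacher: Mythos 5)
Your proof is correct and follows essentially the same route as the paper's one-line argument: each facet corresponds to one irredundant inequality, and each inequality costs $n+1$ stored values. The paper simply asserts the facet--inequality correspondence without comment, whereas you make explicit the (needed) assumptions of minimality and full-dimensionality; this is a welcome clarification but not a different proof.
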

\begin{proof}
 Each of the $F_{n-1}(\mathcal{P})$ $(n-1)$-dimensional polytope facets corresponds to one inequality constraint consisting of $n+1$ values. \hfill $\square$
\end{proof}

\begin{proposition}
The representation complexity of a polytope $\mathcal{P} = \langle c,G,\mathcal{E}\rangle_Z$ in Z-representation is 
\begin{equation*}
	N_Z(\mathcal{P}) = n(h+1) + \mu,
\end{equation*}
where $h$ is the number of generators and $\mu$ is the number of entries in the tuple $\mathcal{E}$. 
\label{prop:repCompPrep}
\end{proposition}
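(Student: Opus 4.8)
The plan is to obtain $N_Z(\mathcal{P})$ by directly counting the values needed to specify the triple $\langle c, G, \mathcal{E}\rangle_Z$ that constitutes a Z-representation. First I would account for the starting point $c \in \mathbb{R}^n$, which requires storing $n$ real values. Next I would account for the generator matrix $G \in \mathbb{R}^{n \times h}$, which has $n$ rows and $h$ columns and therefore requires $nh$ values. Finally I would account for the tuple $\mathcal{E} = (e_1,\dots,e_h)$ of factor indices: by definition \eqref{eq:numListElements}, the total number of entries summed over all the $e_i$ is exactly $\mu = \sum_{i=1}^h m_i$, so storing $\mathcal{E}$ requires $\mu$ values. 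Adding these contributions gives $n + nh + \mu = n(h+1) + \mu$, which is the claimed formula.

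There is no real obstacle in this argument — it is a bookkeeping identity — so the only point worth stating explicitly is the counting convention: each entry of $\mathcal{E}$ is treated as one stored value, on the same footing as a real coefficient of $c$ or $G$, even though the entries of $\mathcal{E}$ are integer indices in $\mathbb{N}_{\le p}$ rather than arbitrary reals. This matches the convention already used in Propositions \ref{prop:repCompVrep} and \ref{prop:repCompHrep} for $N_V$ and $N_H$, so the three quantities remain directly comparable in the subsequent analysis.
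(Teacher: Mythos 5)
Your proposal is correct and follows exactly the same counting as the paper's proof: $n$ values for $c$, $nh$ for $G$, and $\mu$ for $\mathcal{E}$, summing to $n(h+1)+\mu$. The additional remark on the counting convention is a reasonable clarification but does not change the argument.
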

\begin{proof}
 The center vector $c \in \mathbb{R}^n$ consists of $n$ values, the generator matrix $G \in \mathbb{R}^{n \times h}$ of $nh$ values, and the tuple $\mathcal{E}$ of $\mu$ values (see \eqref{eq:numListElements}). \hfill $\square$
\end{proof}

\subsection{Convex Hull of a Zonotope and a Point}
\label{subsec:zonoPoint}

We first consider the special case of a polytope $\mathcal{C} = conv(\mathcal{Z},d)$ that can be described by the convex hull of a zonotope $\mathcal{Z} \subset \mathbb{R}^n$ and a single point $d \in \mathbb{R}^n$. A zonotope is a special type of a polytope or a polynomial zonotope \cite{Ziegler1995}. Using the shorthand for the Z-representation, a zonotope is $\mathcal{Z} = \langle c,G,(1, \dots, m) \rangle_Z$, where $c \in \mathbb{R}^n$ is the center vector, $G \in \mathbb{R}^{n \times m}$ is the generator matrix, and $m$ is the number of zonotope generators. Next, we derive the representation complexity of the polytope $\mathcal{C}$ in V-, H-, and Z-representation.

\myparagraph{V-Representation}

\noindent The number of vertices $q_n(m)$ of an $n$-dimensional zonotope with $m$ generators is \cite[Prop. 2.1.2]{Gritzmann1993}
\begin{equation}
    q_n(m) = 2 \sum_{i=0}^{\min(n,m)-1} {m-1 \choose i }.
	\label{eq:zonotopeVertices}
\end{equation}
The polytope $\mathcal{C}$ defined by the convex hull of a zonotope and a point has, in the worst case, $q_n(m)+1$ vertices. According to Prop. \ref{prop:repCompVrep},
\begin{equation*}
	N_V(\mathcal{C}) \leq n (q_n(m) + 1) \overset{\eqref{eq:zonotopeVertices}}{=} n \left( 1 + 2 \sum_{i=0}^{\min(n,m)-1} {m-1 \choose i }  \right) 
\end{equation*}
is an upper bound for the representation complexity.

\myparagraph{H-Representation}

\noindent An $n$-dimensional zonotope $\mathcal{Z}$ with $m$ generators has at most $2 {m \choose n-1 }$ facets \cite[Chapter 2.2.1]{Althoff2010a}. The maximum number of facets for the set $\mathcal{C} = conv(\mathcal{Z},d)$ is obtained in the case where $d \not\in \mathcal{Z}$, and all facets of $\mathcal{Z}$ except for one facet $\hat{\mathcal{F}}$ are facets of $\mathcal{C}$. It therefore holds that 
\begin{equation}
	F_{n-1}(\mathcal{C}) \leq \underbrace{F_{n-1}(\mathcal{Z}) - 1}_{F_A} + \underbrace{F_{n-1}(conv(\hat{\mathcal{F}},d)) - 1}_{F_B},
	\label{eq:numFacets}
\end{equation} 
where $F_A$ is the number of facets of $\mathcal{Z}$ that are facets of $\mathcal{C}$, and $F_B$ is the number of additional facets resulting from the convex hull of the facet $\hat{\mathcal{F}}$ with the point $d$. The number of facets for the convex hull of facet $\hat{\mathcal{F}}$ and point $d$ is identical to $F_{n-1}(conv(\hat{\mathcal{F}},d)) = F_{n-2}(\hat{\mathcal{F}}) + 1$, where $F_{n-2}(\hat{\mathcal{F}})$ is the number of $(n-2)$-dimensional faces of facet $\hat{\mathcal{F}}$. An upper bound for $F_{n-2}(\hat{\mathcal{F}})$ is $2 {m \choose n-2 }$, which corresponds to the case where $\hat{\mathcal{F}}$ is an $(n-1)$-dimensional zonotope with $m$ generators. Inserting $F_{n-1}(\mathcal{Z}) \leq 2 {m \choose n-1 }$ and $F_{n-1}(conv(\hat{\mathcal{F}},d)) \leq 2 {m \choose n-2 } + 1$ into \eqref{eq:numFacets} yields
\begin{equation}
	F_{n-1}(\mathcal{C}) \leq 2 {m \choose n-1 } - 1 + 2 {m \choose n-2 }.
	\label{eq:facesHrep}
\end{equation}
According to Prop. \ref{prop:repCompHrep}, an upper bound for the representation complexity is
\begin{equation*}
	N_H(\mathcal{C}) = (n+1) F_{n-1}(\mathcal{C}) \overset{\eqref{eq:facesHrep}}{\leq} (n+1) \left( 2 {m \choose n-1 } - 1 + 2 {m \choose n-2 } \right).
\end{equation*}

\myparagraph{Z-Representation}

\noindent For the zonotope $\mathcal{Z}= \langle c,G,(1, \dots, m) \rangle_Z$, we have $h_z=m$ and $\mu_z = m$. The point $d$ can be represented with the Z-representation $d = \langle d,[~],\emptyset \rangle_Z$ with $h_d = 0$ and $\mu_d = 0$. Computation of the convex hull $\mathcal{C} = conv(\mathcal{Z},d)$ therefore results according to \eqref{eq:changeProp} in a Z-representation with $h_c = 2h_z + 2h_d + 1 = 2m+1$ and $\mu_c = 2 \mu_z + 2 \mu_d + h_z + h_d + 1 = 3m+1$. The representation complexity for $\mathcal{C}$ according to Prop. \ref{prop:repCompPrep} is
\begin{equation*}
	N_Z(\mathcal{C}) = n(h_c+1) + \mu_c = 2n + 2mn + 3m + 1.
\end{equation*}

\subsection{Convex Hull of Two Zonotopes}
\label{subsec:convHullZono}

Next, we consider the case of a polytope $\mathcal{C} = conv(\mathcal{Z}_1,\mathcal{Z}_2)$ defined by the convex hull of a full-dimensional zonotope $\mathcal{Z}_1 = \langle c_1,G_1,(1, \dots, m_1) \rangle_Z$ with $m_1$ generators and a full-dimensional zonotope $\mathcal{Z}_2 = \langle c_2,G_2, \linebreak[1] (1, \dots, m_2) \rangle_Z$ with $m_2$ generators. For the Z-representation, the exact representation complexity can be computed. Since the number of facets and vertices of $\mathcal{C}$ depends on the shape of the two zonotopes, the exact representation complexity for the V-representation and the H-representation cannot be computed for the case of general zonotopes. We therefore consider the case where the zonotope with fewer generators encloses the second zonotope, which results in a minimum number of facets and vertices for $\mathcal{C}$, and therefore represents the best case for the V- and H-representation.

\myparagraph{V-Representation}

\noindent The number of vertices $q_n(m)$ of an $n$-dimensional zonotope with $m$ generators is given by \eqref{eq:zonotopeVertices}. A lower bound for the number of vertices of the polytope $\mathcal{C}$ is therefore $q_n(\min(m_1,m_2))$. According to Prop. \ref{prop:repCompVrep}, 
\begin{equation*}
	N_V(\mathcal{C}) \geq n ~ q_n(\min(m_1,m_2)) = 2n \sum_{i=0}^{\min(n,m_1,m_2)-1} {\min(m_1,m_2)-1 \choose i }
\end{equation*}
is a lower bound for the representation complexity.

\myparagraph{H-Representation}

\noindent An $n$-dimensional zonotope with $m$ generators has at most $2 {m \choose n-1 }$ facets \cite[Chapter 2.2.1]{Althoff2010a}. Therefore, $2 {\min(m_1,m_2) \choose n-1 }$ is a lower bound for the number of facets of the set $\mathcal{C}$. According to Prop. \ref{prop:repCompHrep},
\begin{equation*}
N_H(\mathcal{C}) \geq 2 {\min(m_1,m_2) \choose n-1 } (n+1)
\end{equation*}
is a lower bound for the representation complexity.

\myparagraph{Z-Representation}

\noindent For the two zonotopes $\mathcal{Z}_1 = \langle c_1,G_1,(1, \dots, m_1) \rangle_Z$ and  $\mathcal{Z}_2 = \langle c_2,G_2,(1, \dots, m_2) \rangle_Z$, we have $h_1=m_1$, $\mu_1 = m_1$, $h_2=m_2$, and $\mu_2 = m_2$. Computation of the convex hull $\mathcal{C} = conv(\mathcal{P}_1,\mathcal{P}_2)$ therefore results according to \eqref{eq:changeProp} in a Z-representation with $h_c = 2h_1 + 2h_2 + 1 = 2m_1 + 2m_2 + 1$ and $\mu_c = 2 \mu_1 + 2 \mu_2 + h_1 + h_2 + 1 = 3m_1 + 3m_2 + 1$. The representation complexity for $\mathcal{C}$ according to Prop. \ref{prop:repCompPrep} is 
\begin{equation*}
	N_Z(\mathcal{C}) = n_c(h_c + 1) + \mu_c = 2n (m_1 + m_2 + 1) + 3m_1 + 3m_2 + 1.
\end{equation*}

\subsection{Comparison}

\myparagraph{Convex Hull of a Zonotope and a Point}

\noindent The visualization of the representation complexity in Fig. \ref{fig:RepComplexity} shows that depending on the polytope the Z-representation can be significantly more compact than the V- and H-representation, especially for zonotopes with many generators. We further demonstrate this with a numerical example:

\begin{example}
	The polytope $\mathcal{C} = conv(\mathcal{I},d)$ corresponding to the convex hull of the point $d = [ 2 , \mathbf{0} ]^T \in \mathbb{R}^{20}$ and the hypercube $\mathcal{I} = [-\mathbf{1},\mathbf{1}] \subset \mathbb{R}^{20}$ has representation size $N_V(\mathcal{C}) = 10485770$, $N_H(\mathcal{C}) = 1617$, and $N_Z(\mathcal{C}) = 901$.
\end{example}

\begin{figure}
	\centering
  	\includegraphics[width = 0.5 \textwidth]{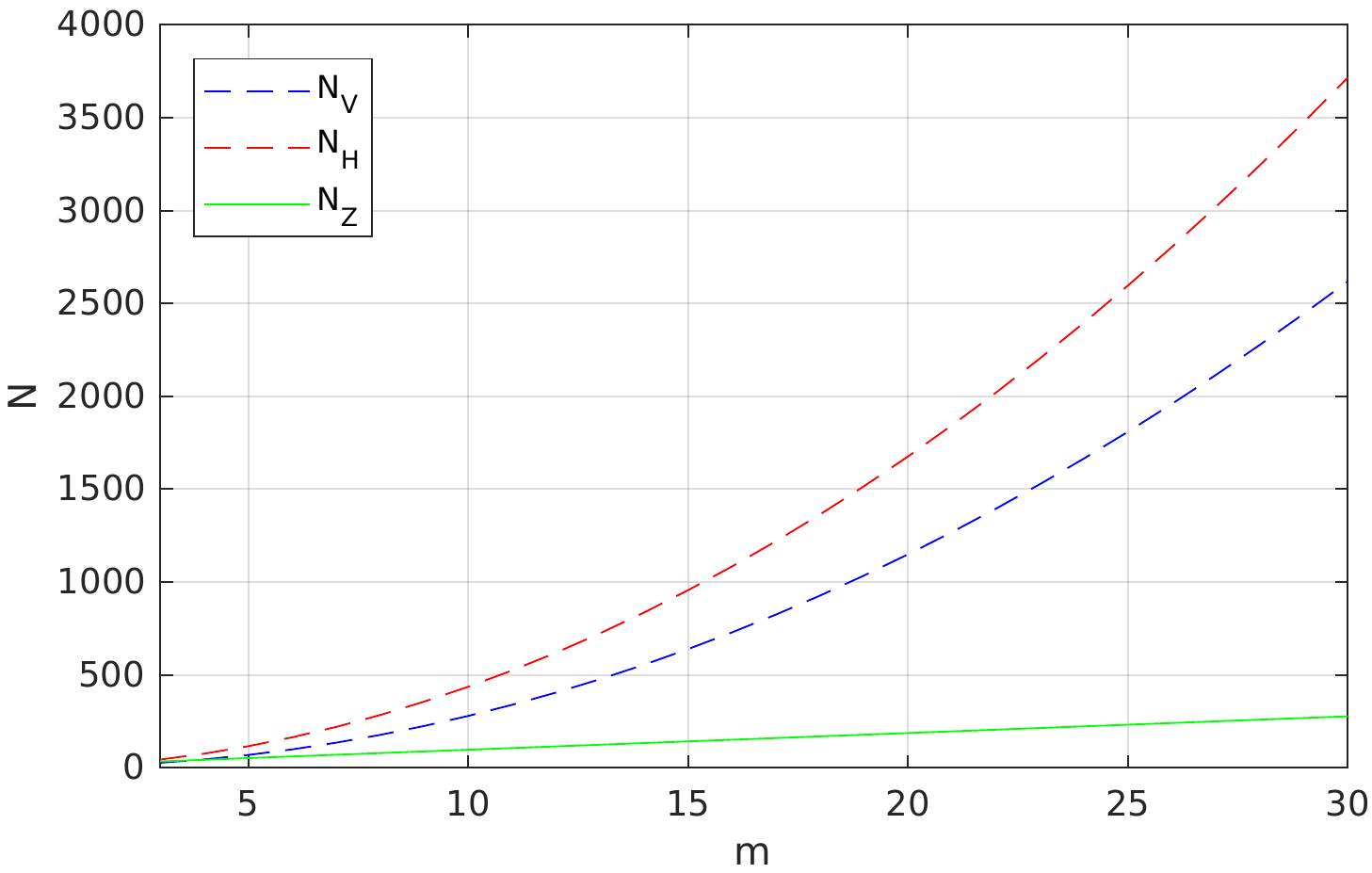}
	\captionof{figure}{Representation complexity of a 3-dimensional polytope defined by the convex hull of a zonotope with $m$ generators and a single point. Exact values are visualized by solid lines, and upper bounds are visualized by dashed lines.}
	\label{fig:RepComplexity}
\end{figure}

\myparagraph{Convex Hull of Two Zonotopes}

\noindent A visualization of the representation with the minimal representation complexity for different value combinations of $n$, $m_1$, and $m_2$ is shown in Fig. \ref{fig:RepComplexityConvHullZono}. The figure demonstrates that for high-dimensional zonotopes with a large number of generators, the Z-representation has the smallest representation complexity, even though we used lower bounds for the representation complexity in V- and H-representation.\\

\noindent For general polytopes, the V- and H-representation are usually more compact than the Z-representation.

\begin{figure}
	\centering
  	\includegraphics[width = 0.95 \textwidth]{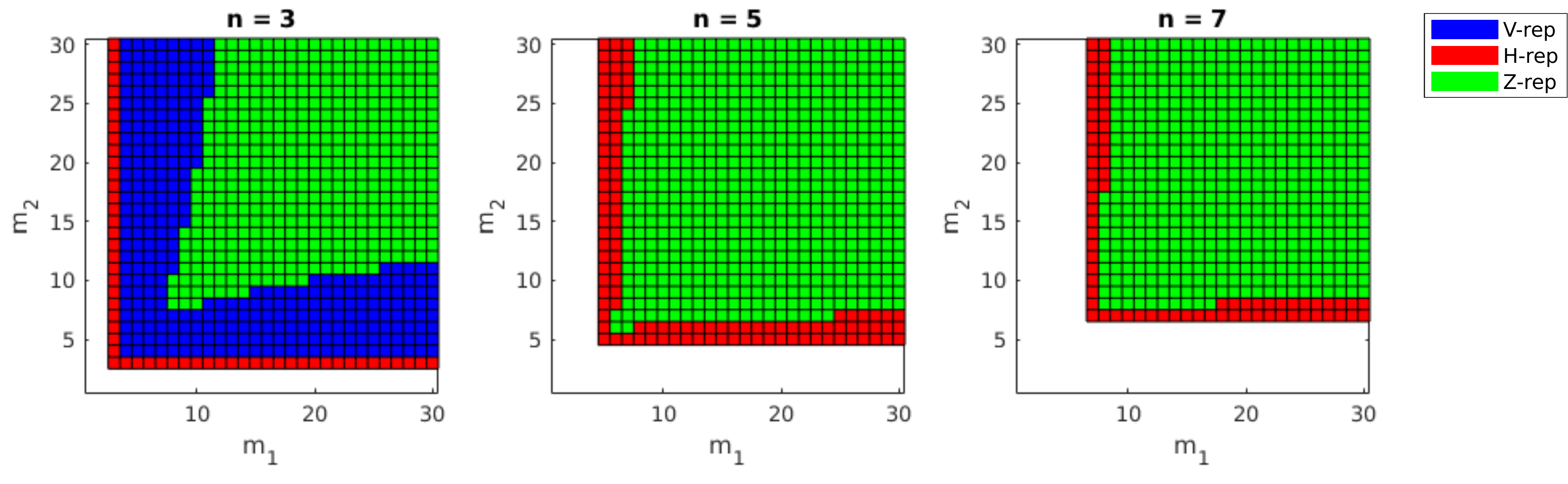}
	\captionof{figure}{Representation with the smallest representation complexity for an $n$-dimensional polytope $\mathcal{C}$ resulting from the convex hull of a zonotope with $m_1$ generators and a zonotope with $m_2$ generators.}
	\label{fig:RepComplexityConvHullZono}
\end{figure}


\section{Application: Range Bounding}
\label{sec:RangeBounding}

One application of the introduced Z-representation of polytopes is range bounding of nonlinear functions, which is defined as follows:

\begin{definition}
	(Range Bounding) Given a function $f: \mathbb{R}^{n} \to \mathbb{R}$ and a set $\mathcal{S} \subset \mathbb{R}^n$, the range bounding operation
	\begin{equation*}
		\operator{bound}(f(x),\mathcal{S}) \supseteq \left[\min\limits_{x \in \mathcal{S}} f(x),~ \max\limits_{x \in \mathcal{S}} f(x)\right]
	\end{equation*}
returns an over-approximation of the exact bounds.
\end{definition}

Two common approaches for range bounding are interval arithmetic \cite{Jaulin2006} and affine arithmetic \cite{deFigueiredo2004}. But due to the dependency problem \cite{Jaulin2006}, these techniques often result in large over-approximations. In addition, both approaches require the over-approximation of the set $\mathcal{S}$ by an axis-aligned interval, which further increases the conservatism in the calculated bounds. One can also use Taylor models \cite{Makino2003} for range bounding, which often enable the computation of significantly tighter bounds \cite{Althoff2018}. However, the Taylor model approach still requires over-approximating the set $\mathcal{S}$ by an axis-aligned interval. Due to the similarity between polynomial zonotopes and Taylor models, the principles behind the Taylor model approach for range bounding can easily be transferred to polynomial zonotopes, and consequently also to polytopes in Z-representation. The main advantage of using polynomial zonotopes for range bounding is that it is not required to over-approximate the set $\mathcal{S}$ with an axis-aligned interval. This enables the computation of significantly tighter function bounds, as the following example demonstrates:

\begin{example}
	We consider the nonlinear function
	\begin{equation*}
		f(x_1,x_2) = - (x_1-1.5)^2 - (x_2-1)^2 + 4 \cos(x_1) \sin(x_2)
	\end{equation*}
	and the polytope in Z-representation
	\begin{equation*}
		\mathcal{P} = \left \langle \begin{bmatrix} 0 \\ -0.5 \end{bmatrix}, \begin{bmatrix} 1 & 0 & 1 \\ -0.5 & 1.5 & -0.5 \end{bmatrix} ,~ \left( 1, 2, \begin{bmatrix} 1 \\ 2 \end{bmatrix} \right) \right\rangle_Z.
	\end{equation*}
	The function and the polytope are visualized in Fig. \ref{fig:RangeBounding}. For comparison, we determined the under-\linebreak[4]approximation $[-14.8872,1.4094]$ of the bounds with sampling. If interval arithmetic is applied to the interval over-approximation $\mathcal{I} \supseteq \mathcal{P}$ of the polytope, we obtain $\operator{bound}(f(x_1,x_2),\mathcal{I}) = [-25.25, 4]$. Applying the Taylor model approach for the interval over-approximation yields $\operator{bound}(f(x_1,x_2),\mathcal{I}) = [-19.7365, 2.3049]$. With the Z-representation, we can use the Taylor model approach directly for the polytope instead of the interval over-approximation resulting in $\operator{bound}(f(x_1,x_2),\mathcal{P}) = [-14.888, 1.4097]$, which is very close to the exact bounds.
	\label{ex:RangeBounding}
\end{example}

\begin{figure}
	\centering
  	\includegraphics[width = 0.9 \textwidth]{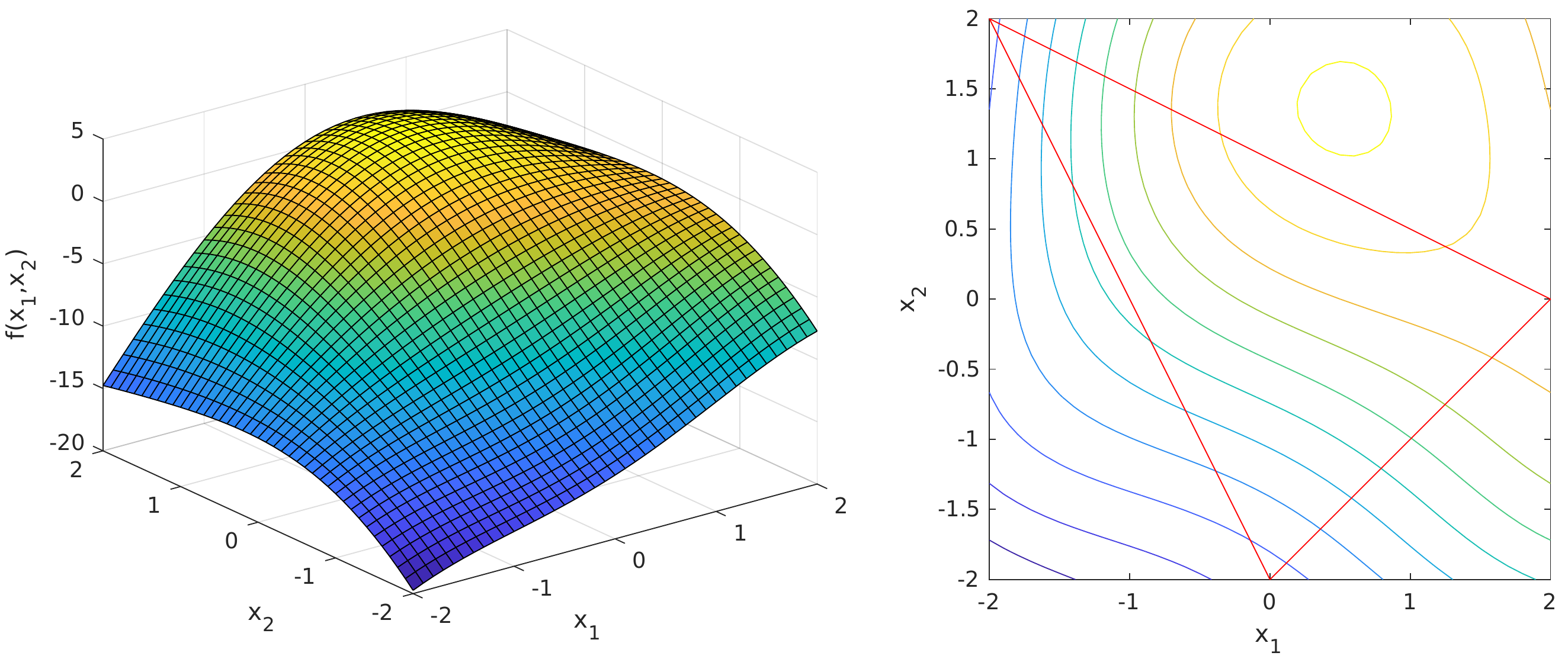}
	\captionof{figure}{Visualization of the function $f(x_1,x_2)$ (left) and the polytope $\mathcal{P}$ (red, right) from Example \ref{ex:RangeBounding}. In addition, the figure on the right shows isoclines of the function $f(x_1,x_2)$.}
	\label{fig:RangeBounding}
\end{figure}


\section{Conclusion}
\label{sec:Conclusion}

In this work, we introduced the novel Z-representation of bounded polytopes. Contrary to all other known representations of polytopes, this new representation enables the computation of linear transformation, Minkowski sum, and convex hull with a computational complexity that is polynomial in the representation size. We further specified algorithms for the conversion between Z-representation and other polytope representations. The complexity of the conversion from the V-representation to Z-representation is polynomial in the number of polytope vertices, and the conversion from Z-representation to V-representation has exponential complexity. In addition, we show that depending on the polytope, the Z-representation can be more compact than the H- and the V-representation. One application for the Z-representation is range bounding, which enables tighter bounds as demonstrated by a numerical example.

\begin{acknowledgements}
The authors gratefully acknowledge financial support by the German Research Foundation (DFG) project faveAC under grant AL 1185/5\_1. 
\end{acknowledgements}

\bibliographystyle{spmpsci}      
\bibliography{kochdumper,cpsGroup}   

\end{document}